\newtheorem{theo}{Theorem}[section]
\newtheorem{coro}{Corollary}[section]
\newtheorem{lemm}{Lemma}[section]
\newtheorem{rema}{Remark}[section]
\newtheorem{exam}{Example}[section]
\numberwithin{equation}{section}
\begin{document}

\title[Inverse problem for the plate equation]{On an inverse problem for the plate equation with passive measurement}

\author{Yixian Gao}
\address{School of
Mathematics and Statistics, Center for Mathematics and
Interdisciplinary Sciences, Northeast Normal University, Changchun, Jilin 130024, China}
\email{gaoyx643@nenu.edu.cn}

\author{Hongyu Liu}
\address{Department of Mathematics, City University of Hong Kong, Kowloon, Hong Kong, China }
\email{hongyu.liuip@gmail.com, hongyliu@cityu.edu.hk}

\author{Yang Liu}
\address{School of Mathematics and Statistics,  Northeast Normal University, Changchun,
Jilin 130024, China}
\email{liuy694@nenu.edu.cn}

\thanks{The research of YG was  supported by NSFC grants 11871140, 12071065 and National Key R\&D Program of China 2020YFA0714102.
 The research
of HL was supported by Hong Kong RGC General Research Funds (project numbers,
11300821, 12301218 and 12302919) and the NSFC-RGC Joint Research Grant (project number,
 N\_CityU101/21).
}

\subjclass[2010]{35R30, 31B10, 74K20}
\keywords{the plate equation, density, internal sources,  passive boundary measurement}

\begin{abstract}
This paper focuses on an inverse problem associated with the plate equation which is derived from models in fluid mechanics and elasticity.
We establish the unique identifying results in simultaneously determining both the unknown density and internal sources from passive boundary measurement.
The proof mainly relies on the asymptotic analysis and harmonic analysis on integral transforms.
\end{abstract}

\maketitle

\section{introduction}\label{se1}
Let $\Omega$ be a compact set in $ \mathbb R^3$    such that $\mathbb R^3\backslash\Omega$ is connected.
Consider the  following  plate equation
\begin{align}\label{plate equation}
\begin{cases}
	\rho( \boldsymbol x)\partial_t^2 u(t, \boldsymbol x)+\Delta^2 u(t,  \boldsymbol x)=0,~\quad (t,  \boldsymbol x)\in \mathbb R_+\times \mathbb R^3,\\
	u(0, \boldsymbol x)=f(\boldsymbol x),\quad\partial_t u(0,  \boldsymbol x)=g(\boldsymbol x),\quad  \boldsymbol x\in \mathbb R^3,
\end{cases}
\end{align}
where the sources $f(\boldsymbol x),  g(\boldsymbol x) \in L^\infty(\mathbb R^3)$ with  $\text{supp}(f)\subset\Omega$ and  $\text{supp}(g)\subset\Omega$ and
  the density function $\rho( \boldsymbol x)\in L^\infty(\mathbb R^3)$ is nonnegative and $\text{supp}(\rho(\boldsymbol x)-1)\subset \Omega$.
Associated with \eqref{plate equation}, we introduce the boundary measurement as follows,
\begin{align}\label{measure}
	\Lambda_{\rho, f, g}(t, \boldsymbol x)=(u(t,  \boldsymbol x), ~\Delta u(t,  \boldsymbol x)),~\quad (t, \boldsymbol x)\in \mathbb R_+\times \partial\Omega.
\end{align}
The inverse problem that we are concerned with is to determine both the unknown density $\rho(\boldsymbol x)$ and the internal sources $f(\boldsymbol x)$ and $g(\boldsymbol x)$ simultaneously by knowledge of $\Lambda_{\rho, f, g}(t, \boldsymbol x)$, $(t, \boldsymbol x)\in \mathbb R_+\times \partial\Omega$. That is,
\begin{equation}\label{eq:ip1}
\Lambda_{\rho, f, g}(t, \boldsymbol x)\big|_{(t, \boldsymbol x)\in \mathbb{R}_+\times \partial\Omega}\longrightarrow \rho, f, g.
\end{equation}
In the physical setup, $\Lambda_{\rho, f, g}(t, \boldsymbol x)$ is referred to as {\em the passive measurement} , since it is generated by certain unknown sources, namely $f$ and $g$. This is in contrast to  {\em the active measurement} where one actively sends a-priori known probing sources and collects the responses for identification purpose.
Here, we would like to point out that the terminology `passive measurement' has different meanings in other fields, such as electronics and information engineering, which should be clear from the context.

The plate equation is a significant portion of engineering construction that arises from the study of mathematical models in fluid mechanics and elasticity.
A mechanical background of the boundary value problem is the elastic bending problem of thin plates.
The inverse scattering problems have played an important role in diverse scientific areas such as radar, sonar, geophysical exploration, and medical imaging.
There have been some literature concentrated on the inverse scattering problems for biharmonic operators, see \cite{Gunther2012,G2010, M2009}.
Compared with the second order differential equations, as acoustic, elastic, and electromagnetic waves, the research of the inverse scattering problem of biharmonic equation is not as extensive as the results of second order differential equations.
The increase of the order leads to the failure of the methods which work for the second order equations.
A detailed description of the properties of the solution can be found in \cite{ma2009}.

Global uniqueness results of recovering potential function or medium parameters associated with bi-harmonic or poly-harmonic operators by active measurements  can be found in \cite{Gunther2012, Yang2014, Yang2017, Liuboya2020}. To our best knowledge, no uniqueness identifiability result is known in the literature in determining unknown medium parameters associated with the bi-harmonic operator by the passive measurement. Indeed, in our study of \eqref{eq:ip1}, we aim at determining both the medium parameter $\rho$ and the unknown sources $f, g$ from the associated passive measurement. In recent years, simultaneously determining both an unknown source and its surrounding medium by the associated passive measurement has received considerable interest in the literature due to its practical importance in emerging applications. In \cite{Liu2015, Knox2020}, the authors proved the uniqueness in determining both an acoustic density and an internal source for the scalar wave equation by the passive measurement, which arises in thermo- and photo-acoustic tomography. In \cite{Liu2019}, the authors established unique recovery results in simultaneously determining an unknown internal current and its surrounding medium by the passive measurement associated with a Maxwell system, which arises in brain imaging. In \cite{DLL1,DLL2}, similar inverse problems were considered associated with the geo-dynamical system which arises in geomagnetic anomaly detection. We also refer to \cite{LLM, Liu2021, cao2019} for more related studies in different physical and mathematical setups. These results show that it is possible to prove the uniqueness of two or more unknowns simultaneously by the passive measurement.
Motivated by \cite{Liu2019, Liu2021, Liu2015, cao2019, Knox2020}, we are interested in recovering both the density $\rho$ and the sources $f, g$ by the passive boundary measurement for the biharmonic plate equation \eqref{plate equation}.

In this article, we shall make use of the temporal Fourier transform, converting the time-dependent problem \eqref{plate equation} into the frequency domain. To that end, we need to require that the plate equation of \eqref{plate equation} is exponentially decaying in time, which can guarantee the well-posedness of the temporal Fourier transform. In order to appeal for a general study, we shall always assume the exponentially decaying in time for the plate equation. Nevertheless, we would like to emphasise that such a property is satisfied by generic mediums and sources. In what follows, we refer to $(f, g, \rho)$ as admissible if the aforementioned time decaying property is fulfilled. In fact, one can refer to the case of the acoustic wave equation  (cf. Theorem 6.1, page 113, \cite{Initial}), and derive general admissibility conditions by following similar arguments. However, this is not the focus of the current study and our main purpose is to study the related inverse problem.
The main methods to obtain the uniqueness results are by performing certain asymptotic analysis in the low frequency regime.
We derive some integral identities involving source functions and density coupling.
Combining those integral identities and using harmonic analysis techniques, the uniqueness results are obtained.
The uniqueness relies on assuming that the density $\rho$ and internal source functions $f, g$ along an arbitrary direction vector $\boldsymbol \iota$  satisfy $\nabla\rho\cdot \boldsymbol \iota=\nabla f\cdot \boldsymbol \iota=\nabla g\cdot \boldsymbol \iota=0$.
Additionally, the above assumption can be replaced by setting the size of the corresponding parameters to obtain a more general uniqueness results.

The paper is organized as follows. In Section \ref{sec2}, we introduce the temporal Fourier transform, converting the time domain problem \eqref{plate equation} into frequency domain and give some notations.
In Section \ref{se3}, we derive  the asymptotic expansions of the solutions with respect to the frequency $\kappa$, and get some integral identities which the source functions and the density are coupled together.
Then the uniqueness results established by a natural admissibility assumption on the parameters.
The more general uniqueness results are  given in Section \ref{s4}.

\section{Problem Formulation}\label{sec2}
In this section, we introduce the temporal Fourier transform to convert the time domain problem \eqref{plate equation} into the frequency domain and introduce some notations.

Our argument depends on the temporal Fourier transform of the  function $u(t, \boldsymbol x)$ defined by
\begin{align*}
	\hat u(\omega, \boldsymbol x):= \frac{1}{2\pi} \int_0^\infty u(t, \boldsymbol x) e^{{\rm i}w t}~{\rm d}t,
~\quad  (\omega, \boldsymbol  x)\in \mathbb R_+\times \mathbb R^3.
\end{align*}

Applying the temporal Fourier transform  to equation \eqref{plate equation} and assume that $\kappa=\omega^{1/2}$,  it yields that   $\hat u(\kappa,  \boldsymbol x)$ satisfies
\begin{align}\label{theq}
\Delta^2\hat u(\kappa, \boldsymbol x)-\kappa^4\rho( \boldsymbol x)\hat u(\kappa, \boldsymbol  x)
=-\frac{{\rm i}\kappa^2}{2\pi}\rho(\boldsymbol x)f( \boldsymbol x)+\frac{1}{2\pi}\rho(\boldsymbol x)g(\boldsymbol x ), \quad (\kappa, \boldsymbol x)\in \mathbb R_+\times \mathbb R^3,
\end{align}
and the boundary measurement \eqref{measure}  in the frequency domain
\begin{align}\label{boundary}
	\hat\Lambda_{\rho, f, g}(\kappa, \boldsymbol x)=\left(\hat u(\kappa, \boldsymbol  x), ~\Delta \hat u(\kappa, \boldsymbol x)\right),~\quad (\kappa, \boldsymbol x)\in \mathbb R_+\times \partial\Omega.
\end{align}

To ensure the well-posedness of the equation \eqref{theq}, we impose an analogue of the Sommerfeld radiation conditions
\begin{align}\label{radi}
\underset{r\rightarrow\infty}{\lim}r \left(\partial_r \hat u(\kappa,  \boldsymbol x)-{\rm i}\kappa\hat u(\kappa, \boldsymbol  x)\right)=0	,~\quad
\underset{r\rightarrow\infty}{\lim}r\left(\partial_{r} (\Delta\hat u(\kappa, \boldsymbol x))-{\rm i}\kappa(\Delta\hat u(\kappa, \boldsymbol x))\right)=0,
\end{align}
 uniformly in all directions $\hat {\boldsymbol x}=\boldsymbol x/|\boldsymbol x|$ with $r=|\boldsymbol x|$ (cf. \cite{Tyni2018}).

One of the key technical ingredients to establish the unique recovery results is first by performing certain asymptotic analysis in the low frequency regime to derive certain integral identities involving the source function and density, which are coupled together.

We introduce some notations.
The fundamental solution for biharmonic operator $\Delta^2-\kappa^4$ in $\mathbb R^3$  is
\begin{align}\label{fundmen}
	G_\kappa(|\boldsymbol x-\boldsymbol y|)=\frac{e^{{\rm i}\kappa|\boldsymbol x-\boldsymbol y|}-e^{-\kappa|\boldsymbol x-\boldsymbol y|}}{8\pi\kappa^2|\boldsymbol x-\boldsymbol y|}~\quad \text{for}~\boldsymbol x\neq \boldsymbol y.
\end{align}
Notice that when $\kappa =0,$ the fundamental solution to $\Delta^2$ is
\begin{align*}
G_0(|\boldsymbol x-\boldsymbol y|)=-\frac{|\boldsymbol x-\boldsymbol y|}{8\pi}	~\quad \text{for}~\boldsymbol x\neq \boldsymbol y,
\end{align*}
and the fundamental solution for $-\Delta$ is
\begin{align*}
g_0(|\boldsymbol x-\boldsymbol y|)=\frac{1}{4\pi|\boldsymbol x-\boldsymbol y|} ~\quad\text{for}~\boldsymbol x\neq \boldsymbol y. 	
\end{align*}

\section{The uniqueness results for density and internal sources} \label{se3}
In this section, we will prove the uniqueness for both the unknown  density  $\rho$  and the internal  sources $f$ and $g$.

\subsection{Auxiliary results}

Before proving the uniqueness results, we first derive several auxiliary results.

\begin{lemm}\label{aux}
	Let $\hat u(\kappa, \boldsymbol x)\in H^2_{loc}(\mathbb R^3)$ be the solution of \eqref{theq} and \eqref{radi}.
	Then $\hat u(\kappa, \boldsymbol x)$ is uniquely given by the following integral equation
	\begin{align}\label{solution}	
	\hat u(\kappa, \boldsymbol x)=&\kappa^4\int_{\mathbb R^3}	(\rho(\boldsymbol y)-1)\hat u(\kappa, \boldsymbol y)G_\kappa(|\boldsymbol x-\boldsymbol y|)~{\rm d}\boldsymbol y-\frac{{\rm i}\kappa^2}{2\pi}\int_{\mathbb R^3}	\rho(\boldsymbol y)f(\boldsymbol y)G_\kappa(|\boldsymbol x-\boldsymbol y|)~{\rm d}\boldsymbol y \nonumber\\
	&+\frac{1}{2\pi}\int_{\mathbb R^3}	\rho(\boldsymbol y)g(\boldsymbol y)G_\kappa(|\boldsymbol x-\boldsymbol y|)~{\rm d}\boldsymbol y,~\quad \boldsymbol x\in \mathbb R^3.
	\end{align}
	
In addition, if taking the expansion of $e^{{\rm i}\kappa|\boldsymbol x-\boldsymbol y|}-e^{-\kappa|\boldsymbol x-\boldsymbol y|}$ as $\kappa\rightarrow+0$, we have
	\begin{align}\label{order3'}
		\hat u(\kappa, \boldsymbol x)=&\frac{1}{2\pi\kappa}\int_{\mathbb R^3}\frac{({\rm i}+1)}{8\pi}\rho(\boldsymbol y)g(\boldsymbol y)~{\rm d}\boldsymbol y-\frac{1}{2\pi}\int_{\mathbb R^3}\frac{1}{8\pi}\rho(\boldsymbol y)g(\boldsymbol y)|\boldsymbol x-\boldsymbol y|~{\rm d}\boldsymbol y\nonumber\\
		&+\frac{\kappa}{2\pi}\int_{\mathbb R^3}\frac{(1-{\rm i})}{8\pi}\rho(\boldsymbol y)g(\boldsymbol y)\frac{|\boldsymbol x-\boldsymbol y|^2}{3!}~{\rm d}\boldsymbol y\nonumber\\
		&-\frac{\rm i\kappa}{2\pi}\int_{\mathbb R^3}\frac{({\rm i}+1)}{8\pi}\rho(\boldsymbol y)f(\boldsymbol y)~{\rm d}\boldsymbol y+\mathcal{O}(\kappa^2),~\quad \boldsymbol x\in B_R,
\end{align}
where $B_R:=B(0, R)$ is a central ball of radius $R\in \mathbb R_+$ and satisfies $\Omega\subset B_R$.
\end{lemm}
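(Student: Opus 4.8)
The plan is to establish the two claims of Lemma~\ref{aux} in sequence: first the integral representation \eqref{solution}, then its low-frequency asymptotic expansion \eqref{order3'}. For the representation, I would recast the PDE \eqref{theq} as a fixed-point problem. Rewrite \eqref{theq} in the form $(\Delta^2-\kappa^4)\hat u = \kappa^4(\rho-1)\hat u - \tfrac{\mathrm{i}\kappa^2}{2\pi}\rho f + \tfrac{1}{2\pi}\rho g$, moving the $\kappa^4\rho\hat u$ term by subtracting and adding $\kappa^4\hat u$ so that the left-hand operator is exactly the constant-coefficient biharmonic Helmholtz operator $\Delta^2-\kappa^4$ whose outgoing fundamental solution is $G_\kappa$ from \eqref{fundmen}. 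Convolving against $G_\kappa$ and invoking the radiation conditions \eqref{radi} to kill boundary contributions at infinity then yields \eqref{solution}. The key analytic point is that \eqref{solution} is a Lippmann--Schwinger-type integral equation: because $\mathrm{supp}(\rho-1)\subset\Omega$ is compact, the operator $T\hat u := \kappa^4\int_{\mathbb R^3}(\rho-1)(\boldsymbol y)\,\hat u(\kappa,\boldsymbol y)\,G_\kappa(|\boldsymbol x-\boldsymbol y|)\,\mathrm d\boldsymbol y$ is compact on $H^2_{loc}$, and uniqueness of the solution under \eqref{radi} (which I may cite from the radiation-condition framework referenced via \cite{Tyni2018}) gives invertibility of $I-T$ by Fredholm theory; this is where the hypotheses $\hat u\in H^2_{loc}$ and the radiation conditions \eqref{radi} earn their keep.

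For the expansion \eqref{order3'}, I would Taylor-expand the numerator of the kernel as $\kappa\to 0^+$. Writing $s=|\boldsymbol x-\boldsymbol y|$, one has
\begin{align*}
e^{\mathrm{i}\kappa s}-e^{-\kappa s}=(\mathrm i+1)\kappa s+\frac{(\mathrm i^2-1)}{2}\kappa^2 s^2+\frac{(\mathrm i^3+1)}{3!}\kappa^3 s^3+\cdots,
\end{align*}
so that after dividing by $8\pi\kappa^2 s$ the kernel $G_\kappa$ expands as
\begin{align*}
G_\kappa(s)=\frac{(\mathrm i+1)}{8\pi\kappa}-\frac{s}{8\pi}+\frac{(1-\mathrm i)}{8\pi}\,\frac{\kappa s^2}{3!}+\mathcal O(\kappa^2).
\end{align*}
I would substitute this into \eqref{solution} term by term. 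The crucial bookkeeping is tracking which source contributes at which order in $\kappa$: the $g$-term carries a prefactor $\tfrac{1}{2\pi}$ and so its leading contribution sits at order $\kappa^{-1}$, while the $f$-term carries the prefactor $-\tfrac{\mathrm i\kappa^2}{2\pi}$, which raises its effective order by two powers of $\kappa$, so its leading contribution appears only at order $\kappa^1$ (explaining why $f$ is absent at orders $\kappa^{-1}$ and $\kappa^0$ in \eqref{order3'}). The $\kappa^4(\rho-1)\hat u$ term, even when $\hat u$ is itself of size $\kappa^{-1}$, contributes at order $\kappa^3$ and is thus absorbed into the remainder.

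The principal obstacle is not the algebra of the expansion but the justification that the $\mathcal O(\kappa^2)$ remainder is genuinely uniform in $\boldsymbol x\in B_R$ and that the expansion may be performed under the integral sign. This requires two things: a uniform-in-$\kappa$ a~priori bound on $\hat u(\kappa,\cdot)$ (which itself behaves like $\kappa^{-1}$ as $\kappa\to 0^+$, consistent with the leading term), obtained by feeding the expansion back into \eqref{solution} in a bootstrap, and control of the integral remainder after the finite Taylor expansion of the exponential, using that $|\boldsymbol x-\boldsymbol y|$ is bounded on the compact set $B_R\times\Omega$ so the $\mathcal O(\kappa^2 s^2)$ Lagrange remainder integrates against the $L^\infty$, compactly supported densities $\rho f$ and $\rho g$ to give an $\mathcal O(\kappa^2)$ bound uniform in $\boldsymbol x$. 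Once this uniformity is in hand, collecting the surviving terms at orders $\kappa^{-1}$, $\kappa^0$, and $\kappa^1$ reproduces \eqref{order3'} exactly. I would present the a~priori estimate and the remainder estimate as the technical heart of the argument, relegating the explicit coefficient matching to a short verification.
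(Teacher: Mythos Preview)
Your plan is correct and tracks the paper's argument closely; the derivation of \eqref{solution} and the kernel expansion of $G_\kappa$ are identical to what the paper does. The one genuine difference is how you dispose of the implicit term $\kappa^4\int(\rho-1)\hat u\,G_\kappa$. You invoke Fredholm compactness for invertibility of $I-T$ and then propose a bootstrap (first establish $\hat u=\mathcal O(\kappa^{-1})$, then feed back) to absorb this term into the remainder. The paper instead works on $C(\overline{B_R})$ and observes directly that for $\kappa$ small the operator $\mathcal K_{\rho,\kappa}$ has norm less than $1$, so the Neumann series converges and in fact $(I-\mathcal K_{\rho,\kappa})^{-1}=I+\mathcal O(\kappa^3)$; applying this to the source terms yields \eqref{order3'} in one stroke, with no separate a~priori estimate or bootstrap needed. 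Your route is perfectly valid but slightly more laborious, and the Fredholm step on $H^2_{loc}$ would need to be recast on a genuine Banach space (e.g.\ $C(\overline{B_R})$ or $L^2(B_R)$) to be clean. One minor slip: the $(\rho-1)\hat u$ contribution enters at order $\kappa^2$, not $\kappa^3$ (since $\kappa^4\cdot G_\kappa\sim\kappa^3$ and $\hat u\sim\kappa^{-1}$), but this still lands in the $\mathcal O(\kappa^2)$ remainder, so your conclusion is unaffected.
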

\begin{proof}
From the regularity theorem in \cite{evans}, it is easy to verify that the solution $\hat u\in H_{loc}^4(\mathbb R^3)$.
 By applying the fundamental solution to \eqref{theq}, we obtain a Lippmann-Schwinger integral equation
\begin{align}\label{lse}
\hat u(\kappa, \boldsymbol x)=&\kappa^4\int_{\mathbb R^3}	(\rho(\boldsymbol y)-1)\hat u(\kappa, \boldsymbol y) G_\kappa(|\boldsymbol x-\boldsymbol y|) ~{\rm d}\boldsymbol y\nonumber\\
&-\frac{{\rm i}\kappa^2}{2\pi}\int_{\mathbb R^3}\rho(\boldsymbol y)f(\boldsymbol y)G_k(|\boldsymbol x-\boldsymbol y|)~{\rm d}\boldsymbol y\nonumber\\
&+\frac{1}{2\pi}\int_{\mathbb R^3}\rho(\boldsymbol y)g(\boldsymbol y)G_k(|\boldsymbol x-\boldsymbol y|)~{\rm d}\boldsymbol y.
\end{align}
Since $\rho(x)-1=0$ in $\mathbb R^3\backslash\bar \Omega$ and $\rho\in L^\infty(\mathbb R^3)$, we assume that $\Omega\subset B_R$ and $\mathcal K_{\rho, \kappa}:C(\overline{B_R})\longrightarrow C(\overline{B_R})$ satisfies
\begin{align*}
\mathcal K_{\rho, \kappa}(\hat u)=\kappa^4\int_{B_R}	(\rho(\boldsymbol y)-1)\hat u(\kappa, \boldsymbol y)  G_\kappa(|\boldsymbol x-\boldsymbol y|)~{\rm d}\boldsymbol y.
\end{align*}
Suppose that $M:=\underset{|\boldsymbol x|\leq R}{\sup} |\rho(\boldsymbol x)-1|$ and $\kappa^2< \frac{2}{MR^2}$, we have $\|\mathcal K_{\rho, \kappa}\|_{L^\infty(B_R)}\leq 1$.
Therefore, there exsits a Neumann series
\begin{align*}
(I-\mathcal K_{\rho, \kappa})^{-1}=I+\mathcal K_{\rho, \kappa}+\mathcal K_{\rho, \kappa}^2+\cdots.
\end{align*}
If taking $\kappa\longrightarrow+0$ and replacing $e^{{\rm i}\kappa|\boldsymbol x-\boldsymbol y|}-e^{-\kappa|\boldsymbol x-\boldsymbol y|}$ by the series expansion
\begin{align*}
e^{{\rm i}\kappa|\boldsymbol x-\boldsymbol y|}-e^{-\kappa|\boldsymbol x-\boldsymbol y|}=\kappa({\rm i}+1)|\boldsymbol x-\boldsymbol y|-\kappa^2|\boldsymbol x-\boldsymbol y|^2+\frac{\kappa^3(1-{\rm i})}{3!}|\boldsymbol x-\boldsymbol y|^3+\mathcal{O}(\kappa^5),
\end{align*}
we calculate that
\begin{align}\label{appro}
(I-\mathcal K_{\rho, \kappa})^{-1}=I+\mathcal O(\kappa^3).
\end{align}
Additionally, substituting \eqref{fundmen} into \eqref{lse}, implies
\begin{align*}
\hat u(\kappa, \boldsymbol x)
=&(I-\mathcal K_{\rho, \kappa})^{-1}\big(-\frac{{\rm i}\kappa^2}{2\pi}\int_{\mathbb R^3}\rho(\boldsymbol y)f(\boldsymbol y)\frac{e^{{\rm i}\kappa|\boldsymbol x-\boldsymbol y|}-e^{-\kappa|\boldsymbol x-\boldsymbol y|}}{8\pi\kappa^2|\boldsymbol x-\boldsymbol y|}~{\rm d}\boldsymbol y\\
&+\frac{1}{2\pi}\int_{\mathbb R^3}\rho(\boldsymbol y)g(\boldsymbol y)\frac{e^{{\rm i}\kappa|\boldsymbol x-\boldsymbol y|}-e^{-\kappa|\boldsymbol x-\boldsymbol y|}}{8\pi\kappa^2|\boldsymbol x-\boldsymbol y|}~{\rm d}\boldsymbol y\big)\\
=&\frac{1}{2\pi}\int_{\mathbb R^3}\rho(\boldsymbol y)g(\boldsymbol y)\frac{({\rm i}+1)}{8\pi\kappa}-\rho(\boldsymbol y)g(\boldsymbol y)\frac{|\boldsymbol x-\boldsymbol y|}{8\pi}+\rho(\boldsymbol y)g(\boldsymbol y)\frac{\kappa(1-{\rm i})}{3!}\frac{|\boldsymbol x-\boldsymbol y|^2}{8\pi}~{\rm d}\boldsymbol y\\
&-\frac{{\rm i}\kappa}{2\pi}\int_{\mathbb R^3}\rho(\boldsymbol y)f(\boldsymbol y)\frac{({\rm i}+1)|\boldsymbol x-\boldsymbol y|}{8\pi|\boldsymbol x-\boldsymbol y|}~{\rm d}\boldsymbol y+\mathcal{O}(\kappa^2),~\quad \boldsymbol x\in B_R.
\end{align*}
The proof is completed.
\end{proof}

\begin{lemm}
	Let $\hat u(\kappa, \boldsymbol x)\in H^2_{loc}(\mathbb R^3)$ be the solution of \eqref{theq} and \eqref{radi}.
	Then the integral equation \eqref{solution} can be rewritten as
\begin{align}\label{solution1}	
\hat u(\kappa, \boldsymbol x)=\underset{m=-1}{\overset{3}\sum}M_m(\boldsymbol x)\kappa^m+\mathcal O(\kappa^4)~\quad \text{as}~ \kappa\rightarrow +0,~\quad \boldsymbol x\in B_R,
\end{align}
where
\begin{align*}
M_{-1}:=&\frac{1}{2\pi}\frac{({\rm i}+1)}{8\pi}\int_{\mathbb R^3}\rho(\boldsymbol y)g(\boldsymbol y)~{\rm d}\boldsymbol y,\\
M_0:=&\frac{1}{2\pi}\int_{\mathbb R^3}\rho(\boldsymbol y)g(\boldsymbol y)G_0(|\boldsymbol x-\boldsymbol y|)~{\rm d}\boldsymbol y,\\
M_1:=&-\frac{{\rm i}}{2\pi}\frac{({\rm i}+1)}{8\pi}\int_{\mathbb R^3}\rho(\boldsymbol y)f(\boldsymbol y)~{\rm d}\boldsymbol y+\frac{1}{2\pi}\frac{(1-{\rm i})}{8\pi}\int_{\mathbb R^3}\rho(\boldsymbol y)g(\boldsymbol y)\frac{|\boldsymbol x-\boldsymbol y|^2}{3!}~{\rm d}\boldsymbol y,\\
M_2:=&(\frac{{\rm i}+1}{8\pi})\int_{\mathbb R^3}(\rho(\boldsymbol y)-1)M_{-1}~{\rm d}\boldsymbol y -\frac{{\rm i}}{2\pi}\int_{\mathbb R^3}\rho(\boldsymbol y)f(\boldsymbol y)G_0(|\boldsymbol x-\boldsymbol y|)~{\rm d}\boldsymbol y,\\
M_3:=&\int_{\mathbb R^3}(\rho(\boldsymbol y)-1)M_{-1} ~G_0(|\boldsymbol x-\boldsymbol y|)~{\rm d}\boldsymbol y+\frac{({\rm i}+1)}{8\pi}\int_{\mathbb R^3}(\rho(\boldsymbol y)-1)M_0(\boldsymbol y)~{\rm d}\boldsymbol y\\
&-\frac{{\rm i}}{2\pi}\frac{(1-{\rm i})}{8\pi}\int_{\mathbb R^3}\rho(\boldsymbol y)f(\boldsymbol y)\frac{|\boldsymbol x-\boldsymbol y|^2}{3!}~{\rm d}\boldsymbol y+\frac{1}{2\pi}\frac{({\rm i}+1)}{8\pi}\int_{\mathbb R^3}\rho(\boldsymbol y)g(\boldsymbol y)\frac{|\boldsymbol x-\boldsymbol y|^4}{5!}~{\rm d}\boldsymbol y.
\end{align*}

Taking the laplacian for \eqref{solution1} with respect to $\boldsymbol x$, imply
\begin{align}\label{solution2}
\Delta_{\boldsymbol x}
\hat u(\kappa, \boldsymbol x)
=&-\frac{1}{2\pi}\int_{\mathbb R^3}\rho(\boldsymbol y)g(\boldsymbol y)g_0(|\boldsymbol x-\boldsymbol y|)~{\rm d}\boldsymbol y+\frac{\kappa}{2\pi}\frac{(1-{\rm i})}{8\pi}\int_{\mathbb R^3}\rho(\boldsymbol y)g(\boldsymbol y)~{\rm d}\boldsymbol y\nonumber\\
&+\frac{{\rm i}\kappa^2}{2\pi}\int_{\mathbb R^3}\rho(\boldsymbol y)f(\boldsymbol y)g_0(|\boldsymbol x-\boldsymbol y|)~{\rm d}\boldsymbol y-\kappa^3\int_{\mathbb R^3}(\rho(\boldsymbol y)-1)M_{-1}~g_0(|\boldsymbol x-\boldsymbol y|)~{\rm d}\boldsymbol y\nonumber\\
&-\frac{{\rm i}\kappa^3}{2\pi}\frac{(1-{\rm i})}{8\pi}\int_{\mathbb R^3}\rho(\boldsymbol y)f(\boldsymbol y)~{\rm d}\boldsymbol y\nonumber\\
&+\frac{\kappa^3}{2\pi}\frac{({\rm i}+1)}{8\pi}\int_{\mathbb R^3}\rho(\boldsymbol y)g(\boldsymbol y)\frac{|\boldsymbol x-\boldsymbol y|^2}{3!}~{\rm d}\boldsymbol y+\mathcal O(\kappa^4)~\quad \text{as}~ \kappa\rightarrow +0,~\quad \boldsymbol x\in B_R.
\end{align}
\end{lemm}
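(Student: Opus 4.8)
The plan is to read off \eqref{solution1} from the Lippmann--Schwinger equation \eqref{solution} by substituting the low-frequency expansion of the kernel $G_\kappa$ and matching powers of $\kappa$, and then to get \eqref{solution2} by differentiating \eqref{solution1} term by term in $\boldsymbol x$. First I would record the kernel expansion, which is produced by the series for $e^{{\rm i}\kappa r}-e^{-\kappa r}$ already used in the proof of Lemma~\ref{aux}: dividing that series by $8\pi\kappa^2|\boldsymbol x-\boldsymbol y|$ and noting that the even-order coefficients beyond the $\kappa^2$ term vanish gives
\begin{align*}
G_\kappa(|\boldsymbol x-\boldsymbol y|)=\frac{1}{8\pi}\left(\frac{{\rm i}+1}{\kappa}-|\boldsymbol x-\boldsymbol y|+\frac{(1-{\rm i})\kappa}{3!}|\boldsymbol x-\boldsymbol y|^2+\frac{({\rm i}+1)\kappa^3}{5!}|\boldsymbol x-\boldsymbol y|^4\right)+\mathcal O(\kappa^4),
\end{align*}
in which the $\kappa^0$ coefficient is exactly $G_0(|\boldsymbol x-\boldsymbol y|)$. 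Since $\Omega\subset B_R$ is compact and $|\boldsymbol x-\boldsymbol y|$ is bounded for $\boldsymbol x\in B_R,\ \boldsymbol y\in\Omega$, every remainder below is uniform in $\boldsymbol x\in B_R$.

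Next I would split \eqref{solution} into its two source integrals and the scattering integral, and treat the source integrals directly. Inserting the kernel expansion into $\frac{1}{2\pi}\int\rho g\,G_\kappa\,{\rm d}\boldsymbol y$ contributes at orders $\kappa^{-1},\kappa^0,\kappa^1,\kappa^3$, yielding the $g$-parts of $M_{-1},M_0,M_1$ and $M_3$; inserting it into $-\frac{{\rm i}\kappa^2}{2\pi}\int\rho f\,G_\kappa\,{\rm d}\boldsymbol y$ shifts every kernel order up by two, so it contributes at orders $\kappa^1,\kappa^2,\kappa^3$, yielding the $f$-parts of $M_1,M_2$ and $M_3$. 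Collecting these reproduces all the explicitly displayed source terms in the coefficients $M_m$ (and explains, e.g., why $M_2$ carries no $g$-contribution, the $\kappa^2$ coefficient of $G_\kappa$ being zero).

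The only term referring back to $\hat u$ is the scattering integral $\kappa^4\int(\rho-1)\hat u\,G_\kappa\,{\rm d}\boldsymbol y$. Because Lemma~\ref{aux} already supplies $\hat u=M_{-1}\kappa^{-1}+M_0+\mathcal O(\kappa)$ (this is \eqref{order3'}) and $G_\kappa=\mathcal O(\kappa^{-1})$, the product $\hat u\,G_\kappa$ is $\mathcal O(\kappa^{-2})$, so the scattering integral is $\mathcal O(\kappa^2)$ and first enters at order $\kappa^2$. Substituting the two leading terms of $\hat u$ together with the $\kappa^{-1}$ and $\kappa^0$ parts of the kernel and collecting the $\kappa^2$ and $\kappa^3$ contributions produces exactly the scattering piece $\frac{{\rm i}+1}{8\pi}\int(\rho-1)M_{-1}\,{\rm d}\boldsymbol y$ in $M_2$ and the pieces $\int(\rho-1)M_{-1}G_0\,{\rm d}\boldsymbol y+\frac{{\rm i}+1}{8\pi}\int(\rho-1)M_0\,{\rm d}\boldsymbol y$ in $M_3$. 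That this self-referential substitution is legitimate—i.e.\ that the omitted iterates are genuinely $\mathcal O(\kappa^4)$ uniformly on $B_R$—is precisely the content of the Neumann series estimate \eqref{appro}, namely $(I-\mathcal K_{\rho,\kappa})^{-1}=I+\mathcal O(\kappa^3)$. Adding source and scattering contributions order by order establishes \eqref{solution1}.

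Finally, to obtain \eqref{solution2} I would differentiate \eqref{solution1} in $\boldsymbol x$ term by term. All coefficients that are constant in $\boldsymbol x$—the factors $\int\rho g\,{\rm d}\boldsymbol y$, $\int\rho f\,{\rm d}\boldsymbol y$, $\int(\rho-1)M_{-1}\,{\rm d}\boldsymbol y$ and $\int(\rho-1)M_0\,{\rm d}\boldsymbol y$—are annihilated by $\Delta_{\boldsymbol x}$, which removes $M_{-1}$ and the constant pieces of the higher coefficients. For the surviving kernels I would use the three-dimensional identities
\begin{align*}
\Delta_{\boldsymbol x}G_0(|\boldsymbol x-\boldsymbol y|)=-g_0(|\boldsymbol x-\boldsymbol y|),\qquad
\Delta_{\boldsymbol x}\frac{|\boldsymbol x-\boldsymbol y|^2}{3!}=1,\qquad
\Delta_{\boldsymbol x}\frac{|\boldsymbol x-\boldsymbol y|^4}{5!}=\frac{|\boldsymbol x-\boldsymbol y|^2}{3!},
\end{align*}
all of which follow from $\Delta_{\boldsymbol x}|\boldsymbol x-\boldsymbol y|^k=k(k+1)|\boldsymbol x-\boldsymbol y|^{k-2}$ together with $G_0(r)=-r/(8\pi)$ and $g_0(r)=1/(4\pi r)$; here $\Delta_{\boldsymbol x}G_0=-g_0$ holds with no point-mass contribution, the delta appearing only at the level of $\Delta^2G_0$. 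Applying these order by order turns $M_0,M_1,M_2,M_3$ into the four groups of terms in \eqref{solution2}. I expect the principal obstacle to be the bootstrap of the scattering term above: one must ensure that feeding the lower-order expansion of $\hat u$ into the $\kappa^4$ integral captures every contribution up to $\kappa^3$ with a remainder that is uniformly $\mathcal O(\kappa^4)$ on $B_R$, which is exactly where \eqref{appro} is needed. A secondary technical point is justifying differentiation under the integral sign through the singular kernels $G_0$ and $g_0$, which is permissible since both are locally integrable in $\mathbb R^3$ and $\rho,f,g$ are bounded with support in $\Omega$.
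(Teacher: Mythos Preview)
Your proposal is correct and follows essentially the same route as the paper: substitute the expansion \eqref{order3'} of $\hat u$ from Lemma~\ref{aux} into the Lippmann--Schwinger equation \eqref{solution}, expand the kernel $G_\kappa$ in powers of $\kappa$, split into the scattering piece and the two source pieces (the paper's $I_1,I_2,I_3$), collect coefficients, and then apply $\Delta_{\boldsymbol x}$ term by term. You are somewhat more explicit than the paper about the Laplacian identities and the role of \eqref{appro} in controlling the bootstrap remainder, but the argument is the same.
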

\begin{proof}
Plugging \eqref{order3'} into \eqref{solution} and using the series expansion
\begin{align*}
		e^{{\rm i}\kappa|\boldsymbol x-\boldsymbol y|}-e^{-\kappa|\boldsymbol x-\boldsymbol y|}=&\kappa({\rm i}+1)|\boldsymbol x-\boldsymbol y|-\kappa^2|\boldsymbol x-\boldsymbol y|^2+\frac{\kappa^3(1-{\rm i})}{3!}|\boldsymbol x-\boldsymbol y|^3\nonumber\\
		&+\frac{\kappa^5({\rm i}+1)}{5!}|\boldsymbol x-\boldsymbol y|^5+\mathcal{O}(\kappa^6)~\quad \text{as}~ \kappa\rightarrow+0,
\end{align*}
 uniformly for $\boldsymbol  x \in B_R$, we get
\begin{align}\label{aux3}
\hat u(\kappa, \boldsymbol x)=
&\kappa^{4}\int_{\mathbb R^3}(\rho(\boldsymbol y)-1)\big(\frac{1}{2\pi\kappa}\int_{\mathbb R^3}\frac{({\rm i}+1)}{8\pi}\rho(\boldsymbol z)g(\boldsymbol z)~{\rm d}\boldsymbol z-\frac{1}{2\pi}\int_{\mathbb R^3}\rho(\boldsymbol z)g(\boldsymbol z)\frac{|\boldsymbol y-\boldsymbol z|}{8\pi}~{\rm d}\boldsymbol z\nonumber\\
&+\frac{\kappa}{2\pi}\int_{\mathbb R^3}\frac{(1-{\rm i})}{8\pi}\rho(\boldsymbol z)g(\boldsymbol z)\frac{|\boldsymbol y-\boldsymbol z|^2}{3!}~{\rm d}\boldsymbol z\nonumber\\
&-\frac{\rm i\kappa}{2\pi}\int_{\mathbb R^3}\frac{({\rm i}+1)}{8\pi}\rho(\boldsymbol z)f(\boldsymbol z)~{\rm d}\boldsymbol z+\mathcal{O}(\kappa^2)\big)\frac{e^{{\rm i}\kappa|\boldsymbol x-\boldsymbol y|}-e^{-\kappa|\boldsymbol x-\boldsymbol y|}}{8\pi\kappa^2|\boldsymbol x-\boldsymbol y|} ~{\rm d}\boldsymbol y\nonumber\\
&-\frac{{\rm i}\kappa^2}{2\pi}\int_{\mathbb R^3}\rho(\boldsymbol y)f(\boldsymbol y)\frac{e^{{\rm i}\kappa|\boldsymbol x-\boldsymbol y|}-e^{-\kappa|\boldsymbol x-\boldsymbol y|}}{8\pi\kappa^2|\boldsymbol x-\boldsymbol y|}~{\rm d}\boldsymbol y\nonumber\\
&+\frac{1}{2\pi}\int_{\mathbb R^3}\rho(\boldsymbol y)g(\boldsymbol y)\frac{e^{{\rm i}\kappa|\boldsymbol x-\boldsymbol y|}-e^{-\kappa|\boldsymbol x-\boldsymbol y|}}{8\pi\kappa^2|\boldsymbol x-\boldsymbol y|}~{\rm d}\boldsymbol y\nonumber\\
:=&I_1+I_2+I_3+\mathcal O(\kappa^4).
\end{align}
By direct calculation, one has
\begin{align*}
I_1=&\kappa^4\int_{\mathbb R^3}(\rho(\boldsymbol y)-1)\big(\frac{1}{2\pi\kappa}\int_{\mathbb R^3}\frac{({\rm i}+1)}{8\pi}\rho(\boldsymbol z)g(\boldsymbol z)~{\rm d}\boldsymbol z-\frac{1}{2\pi}\int_{\mathbb R^3}\rho(\boldsymbol z)g(\boldsymbol z)\frac{|\boldsymbol y-\boldsymbol z|}{8\pi}~{\rm d}\boldsymbol z\\
&+\frac{\kappa}{2\pi}\int_{\mathbb R^3}\frac{(1-{\rm i})}{8\pi}\rho(\boldsymbol z)g(\boldsymbol z)\frac{|\boldsymbol y-\boldsymbol z|^2}{3!}~{\rm d}\boldsymbol z\\
&-\frac{\rm i\kappa}{2\pi}\int_{\mathbb R^3}\frac{({\rm i}+1)}{8\pi}\rho(\boldsymbol z)f(\boldsymbol z)~{\rm d}\boldsymbol z+\mathcal{O}(\kappa^2)\big)\frac{e^{{\rm i}\kappa|\boldsymbol x-\boldsymbol y|}-e^{-\kappa|\boldsymbol x-\boldsymbol y|}}{8\pi\kappa^2|\boldsymbol x-\boldsymbol y|} ~{\rm d}\boldsymbol y\\
=&\kappa^2\int_{\mathbb R^3}(\rho(\boldsymbol y)-1)\frac{1}{2\pi}\int_{\mathbb R^3}\frac{({\rm i}+1)}{8\pi}\rho(\boldsymbol z)g(\boldsymbol z)~{\rm d}\boldsymbol z~\frac{({\rm i}+1)}{8\pi}~{\rm d}\boldsymbol y\\
&+\kappa^3\int_{\mathbb R^3}(\rho(\boldsymbol y)-1)\frac{1}{2\pi}\int_{\mathbb R^3}\frac{({\rm i}+1)}{8\pi}\rho(\boldsymbol z)g(\boldsymbol z)~{\rm d}\boldsymbol z~G_0(|\boldsymbol x-\boldsymbol y|)~{\rm d}\boldsymbol y\\
&+\kappa^3\int_{\mathbb R^3}(\rho(\boldsymbol y)-1)\frac{1}{2\pi}\int_{\mathbb R^3}\rho(\boldsymbol z)g(\boldsymbol z)G_0(|\boldsymbol y-\boldsymbol z|)~{\rm d}\boldsymbol z~\frac{({\rm i}+1)}{8\pi}~ {\rm d}\boldsymbol y+\mathcal{O}(\kappa^4),~\quad \boldsymbol x\in B_R,
\end{align*}
\begin{align*}
I_2=&-\frac{{\rm i}\kappa^2}{2\pi}\int_{\mathbb R^3}\rho(\boldsymbol y)f(\boldsymbol y)\frac{e^{{\rm i}\kappa|\boldsymbol x-\boldsymbol y|}-e^{-\kappa|\boldsymbol x-\boldsymbol y|}}{8\pi\kappa^2|\boldsymbol x-\boldsymbol y|}~{\rm d}\boldsymbol y\\
=&-\frac{{\rm i}\kappa}{2\pi}\frac{({\rm i}+1)}{8\pi}\int_{\mathbb R^3}\rho(\boldsymbol y)f(\boldsymbol y)~{\rm d}\boldsymbol y-\frac{{\rm i}\kappa^2}{2\pi}\int_{\mathbb R^3}\rho(\boldsymbol y)f(\boldsymbol y)G_0(|\boldsymbol x-\boldsymbol y|)~{\rm d}\boldsymbol y\\
&-\frac{{\rm i}\kappa^3}{2\pi}\frac{(1-{\rm i})}{8\pi}\int_{\mathbb R^3}\rho(\boldsymbol y)f(\boldsymbol y)\frac{|\boldsymbol x-\boldsymbol y|^2}{3!}~{\rm d}\boldsymbol y+\mathcal{O}(\kappa^4),~\quad \boldsymbol x\in B_R,
\end{align*}
and
\begin{align*}
I_3=&\frac{1}{2\pi}\int_{\mathbb R^3}\rho(\boldsymbol y)g(\boldsymbol y)\frac{e^{{\rm i}\kappa|\boldsymbol x-\boldsymbol y|}-e^{-\kappa|\boldsymbol x-\boldsymbol y|}}{8\pi\kappa^2|\boldsymbol x-\boldsymbol y|}~{\rm d}\boldsymbol y\\
=&\frac{1}{2\pi\kappa}\frac{({\rm i}+1)}{8\pi}\int_{\mathbb R^3}\rho(\boldsymbol y)g(\boldsymbol y)~{\rm d}\boldsymbol y+\frac{1}{2\pi}\int_{\mathbb R^3}\rho(\boldsymbol y)g(\boldsymbol y)G_0(|\boldsymbol x-\boldsymbol y|)~{\rm d}\boldsymbol y\nonumber\\
&+\frac{\kappa}{2\pi}\frac{(1-{\rm i})}{8\pi}\int_{\mathbb R^3}\rho(\boldsymbol y)g(\boldsymbol y)\frac{|\boldsymbol x-\boldsymbol y|^2}{3!}~{\rm d}\boldsymbol y\\
&+\frac{\kappa^3}{2\pi}\frac{({\rm i}+1)}{8\pi}\int_{\mathbb R^3}\rho(\boldsymbol y)g(\boldsymbol y)\frac{|\boldsymbol x-\boldsymbol y|^4}{5!}~{\rm d}\boldsymbol y+\mathcal O(\kappa^4),~\quad \boldsymbol x\in B_R.
\end{align*}
Taking the Laplacian on both side of equality \eqref{aux3} with respect to $\boldsymbol x$, we obtain
\begin{align*}
\Delta_{\boldsymbol x}\hat u(\kappa, \boldsymbol x)
=&-\frac{\kappa^3}{2\pi}\frac{({\rm i}+1)}{8\pi}\int_{\mathbb R^3}(\rho(\boldsymbol y)-1)\int_{\mathbb R^3}\rho(\boldsymbol z)g(\boldsymbol z)~{\rm d} \boldsymbol z~g_0(|\boldsymbol x-\boldsymbol y|)~{\rm d}\boldsymbol y\\
&+\frac{{\rm i}\kappa^2}{2\pi}\int_{\mathbb R^3}\rho(\boldsymbol y)f(\boldsymbol y)g_0(|\boldsymbol x-\boldsymbol y|)~{\rm d}\boldsymbol y-\frac{{\rm i}\kappa^3}{2\pi}\frac{(1-{\rm i})}{8\pi}\int_{\mathbb R^3}\rho(\boldsymbol y)f(\boldsymbol y)~{\rm d}\boldsymbol y\\
&-\frac{1}{2\pi}\int_{\mathbb R^3}\rho(\boldsymbol y)g(\boldsymbol y)g_0(|\boldsymbol x-\boldsymbol y|)~{\rm d}\boldsymbol y+\frac{\kappa}{2\pi}\frac{(1-{\rm i})}{8\pi}\int_{\mathbb R^3}\rho(\boldsymbol y)g(\boldsymbol y)~{\rm d}\boldsymbol y\\
&+\frac{\kappa^3}{2\pi}\frac{({\rm i}+1)}{8\pi}\int_{\mathbb R^3}\rho(\boldsymbol y)g(\boldsymbol y)\frac{|\boldsymbol x-\boldsymbol y|^2}{3!}~{\rm d}\boldsymbol y+\mathcal O(\kappa^4),~\quad \boldsymbol x\in B_R
\end{align*}
as $\kappa\rightarrow +0$.
The proof is completed.
\end{proof}

\begin{rema}
Let $n\in \mathbb N\cup\{0\}$, then the solution $\hat u(\kappa, \boldsymbol x)$ can be  represented as
\begin{align*}
	\hat u(\kappa, \boldsymbol x)=\underset{m=-1}{\overset{n+1}\sum}M_m(\boldsymbol x)\kappa^m+M_{n+2}(\boldsymbol x)\kappa^{n+2}+\mathcal O(\kappa^{n+3}),
\end{align*}
and
\begin{align*}
	\Delta_{\boldsymbol x}\hat u(\kappa, \boldsymbol x)=\underset{m=0}{\overset{n+2}\sum}N_m(\boldsymbol x)\kappa^m+N_{n+3}(\boldsymbol x)\kappa^{n+3}+\mathcal O(\kappa^{n+4})~\quad \text{as}~ \kappa\rightarrow +0,
\end{align*}
 where
\begin{align*}
M_{n+2}(\boldsymbol x):=&\underset{m=0}{\overset{n}\sum}\int_{\mathbb R^3}(\rho(\boldsymbol y)-1)M_{n-m-1}(\boldsymbol y)\frac{{\rm i}^{m+1}-(-1)^{m+1}}{8\pi}\frac{|\boldsymbol x-\boldsymbol y|^m}{(m+1)!}~{\rm d}\boldsymbol y\\
&-\frac{{\rm i}}{2\pi}\int_{\mathbb R^3}\rho(\boldsymbol y)f(\boldsymbol y)\frac{{\rm i}^{n+2}-(-1)^{n+2}}{8\pi}\frac{|\boldsymbol x-\boldsymbol y|^{n+1}}{(n+2)!}~{\rm d}\boldsymbol y\\
&+\frac{1}{2\pi}\int_{\mathbb R^3}\rho(\boldsymbol y)g(\boldsymbol y)\frac{{\rm i}^{n+4}-(-1)^{n+4}}{8\pi}\frac{|\boldsymbol x-\boldsymbol y|^{n+3}}{(n+4)!}~{\rm d}\boldsymbol y,
\end{align*}
and
\begin{align*}
N_{n+3}:=&\underset{m=1}{\overset{n+1}\sum}\int_{\mathbb R^3}(\rho(\boldsymbol y)-1)M_{n-m}(\boldsymbol y)\frac{{\rm i}^{m+1}-(-1)^{m+1}}{8\pi}\frac{|\boldsymbol x-\boldsymbol y|^{m-2}}{(m-1)!}~{\rm d}\boldsymbol y\\
&-\frac{{\rm i}}{2\pi}\int_{\mathbb R^3}\rho(\boldsymbol y)f(\boldsymbol y)\frac{{\rm i}^{n+3}-(-1)^{n+3}}{8\pi}\frac{|\boldsymbol x-\boldsymbol y|^{n}}{(n+1)!}~{\rm d}\boldsymbol y\\
&+\frac{1}{2\pi}\int_{\mathbb R^3}\rho(\boldsymbol y)g(\boldsymbol y)\frac{{\rm i}^{n+5}-(-1)^{n+5}}{8\pi}\frac{|\boldsymbol x-\boldsymbol y|^{n+2}}{(n+3)!}~{\rm d}\boldsymbol y, ~\quad ~\boldsymbol x\in B_R.
\end{align*}
In addition, $N_m(\boldsymbol x)\kappa^m, m=0, 1, 2$ corresponds to the first three terms in \eqref{solution2}, respectively.
\end{rema}

\begin{theo}\label{t1}
Assume that  $(\rho_1, f_1, g_1)$ and $(\rho_2, f_2, g_2)$ are two sets of admissible configurations and supported in $\Omega$.
If
	\begin{align}\label{equlity}
	\Lambda_{\rho_1, f_1, g_1}(t, \boldsymbol x)=\Lambda_{\rho_2, f_2, g_2}(t, \boldsymbol x),~\quad (t, \boldsymbol x)\in \mathbb R_+\times \partial\Omega.
	\end{align}
Then for any harmonic function $h(\boldsymbol x)$, we have
\begin{align}
\int_{\mathbb R^3}(\rho_1f_1-\rho_2f_2)(\boldsymbol x)h(\boldsymbol x)	~{\rm d}\boldsymbol x=0,\label{uniqueness}\\
\int_{\mathbb R^3}(\rho_1g_1-\rho_2g_2)(\boldsymbol x) h(\boldsymbol x)	~{\rm d}\boldsymbol x=0.\label{uniqueness1}
\end{align}
Furthermore, for any $\boldsymbol x\in \partial B_R$, the following holds
\begin{align}\label{eq3}
&\int_{\mathbb R^3}(\rho_1(\boldsymbol y)-1)\int_{\mathbb R^3}\rho_1(\boldsymbol z)g_1(\boldsymbol z)~{\rm d}\boldsymbol z ~g_0(|\boldsymbol x-\boldsymbol y|)~{\rm d}\boldsymbol y
\nonumber\\
&\hspace{3cm}+\int_{\mathbb R^3}\rho_1(\boldsymbol y)f_1(\boldsymbol y)~{\rm d}\boldsymbol y-\int_{\mathbb R^3}\rho_1(\boldsymbol y)g_1(\boldsymbol y)\frac{|\boldsymbol x-\boldsymbol y|^2}{3!}~{\rm d}\boldsymbol y\nonumber\\
=&\int_{\mathbb R^3}(\rho_2(\boldsymbol y)-1)\int_{\mathbb R^3}\rho_2(\boldsymbol z)g_2(\boldsymbol z)~{\rm d}\boldsymbol z ~g_0(|\boldsymbol x-\boldsymbol y|)~{\rm d}\boldsymbol y\nonumber\\
&\hspace{3cm}+\int_{\mathbb R^3}\rho_2(\boldsymbol y)f_2(\boldsymbol y)~{\rm d}\boldsymbol y-\int_{\mathbb R^3}\rho_2(\boldsymbol y)g_2(\boldsymbol y)\frac{|\boldsymbol x-\boldsymbol y|^2}{3!}~{\rm d}\boldsymbol y.
\end{align}
\end{theo}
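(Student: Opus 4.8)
The plan is to move to the frequency domain, to upgrade the equality of the boundary data from $\partial\Omega$ to the whole exterior $\mathbb R^3\setminus\Omega$ by an exterior uniqueness argument, and then to read off \eqref{uniqueness}, \eqref{uniqueness1} and \eqref{eq3} by matching the low-frequency expansions \eqref{solution1}, \eqref{solution2} term by term in $\kappa$. Since both triples are admissible, applying the temporal Fourier transform to \eqref{equlity} and using \eqref{boundary} turns the hypothesis into $\hat u_1(\kappa,\boldsymbol x)=\hat u_2(\kappa,\boldsymbol x)$ and $\Delta\hat u_1(\kappa,\boldsymbol x)=\Delta\hat u_2(\kappa,\boldsymbol x)$ for every $(\kappa,\boldsymbol x)\in\mathbb R_+\times\partial\Omega$, where each $\hat u_j$ solves \eqref{theq}--\eqref{radi} with data $(\rho_j,f_j,g_j)$.

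Put $w=\hat u_1-\hat u_2$. Outside $\Omega$ we have $\rho_j\equiv1$ and $f_j=g_j=0$, so $w$ solves $(\Delta^2-\kappa^4)w=0$ there, obeys the radiation conditions \eqref{radi}, and has vanishing Cauchy data $w=\Delta w=0$ on $\partial\Omega$. Using the factorization $\Delta^2-\kappa^4=(\Delta+\kappa^2)(\Delta-\kappa^2)$, I set $q=(\Delta-\kappa^2)w$, so that $(\Delta+\kappa^2)q=0$ in $\mathbb R^3\setminus\Omega$ and $q=\Delta w-\kappa^2 w=0$ on $\partial\Omega$; moreover $q$ inherits the Sommerfeld condition from the two conditions in \eqref{radi}, hence is an outgoing Helmholtz solution with zero Dirichlet data and therefore vanishes in the exterior by the standard exterior uniqueness for the Helmholtz equation. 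Consequently $(\Delta-\kappa^2)w=q=0$, so $w$ itself satisfies the Yukawa equation with $w=0$ on $\partial\Omega$ and $w\to0$ at infinity, whence $w\equiv0$ by the maximum principle. Thus $\hat u_1=\hat u_2$ and $\Delta\hat u_1=\Delta\hat u_2$ throughout $B_R\setminus\Omega$, and in particular on $\partial B_R$.

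With these equalities available for all small $\kappa$, the uniqueness of asymptotic expansions applied to \eqref{solution2} (and its general coefficients $N_m$) forces $N_m^{(1)}=N_m^{(2)}$ on $B_R\setminus\Omega$ for every $m$. Writing $\phi=\rho_1g_1-\rho_2g_2$ and $\psi=\rho_1f_1-\rho_2f_2$, the orders $\kappa^0$ and $\kappa^2$ give $\int_{\mathbb R^3}\phi(\boldsymbol y)g_0(|\boldsymbol x-\boldsymbol y|)\,{\rm d}\boldsymbol y=0$ and $\int_{\mathbb R^3}\psi(\boldsymbol y)g_0(|\boldsymbol x-\boldsymbol y|)\,{\rm d}\boldsymbol y=0$ for $\boldsymbol x\in B_R\setminus\Omega$. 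Each left-hand side is the Newtonian potential of a density supported in $\Omega$, hence harmonic and decaying in the exterior; vanishing on the open set $B_R\setminus\Omega$, it vanishes on all of $\mathbb R^3\setminus\Omega$ by analytic continuation (or exterior uniqueness for $-\Delta$). For any harmonic $h$ I then apply Green's identity on a ball $B\supset\Omega$ whose boundary lies in this null region: since $W_\phi:=\int\phi\,g_0=\partial_\nu W_\phi=0$ on $\partial B$ and $\Delta h=0$, the boundary terms drop and $-\Delta W_\phi=\phi$ yields $\int_{\mathbb R^3}\phi\,h\,{\rm d}\boldsymbol x=0$, which is \eqref{uniqueness1}; the same computation with $\psi$ gives \eqref{uniqueness}.

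Finally, \eqref{eq3} is nothing but the order $\kappa^3$ matching $N_3^{(1)}=N_3^{(2)}$ evaluated on $\partial B_R$: factoring the common constant $-\frac{1}{2\pi}\frac{({\rm i}+1)}{8\pi}$ out of the three pieces of the $\kappa^3$ coefficient in \eqref{solution2} and substituting $M_{-1}=\frac{1}{2\pi}\frac{({\rm i}+1)}{8\pi}\int_{\mathbb R^3}\rho g\,{\rm d}\boldsymbol y$ turns this equality precisely into \eqref{eq3}. I expect the main obstacle to be the exterior uniqueness of the second step, which is exactly what lets the conclusions be posed on $\partial B_R$ rather than merely on $\partial\Omega$: the coefficient $N_3$ is not harmonic in $\boldsymbol x$ (it contains the term $\int_{\mathbb R^3}\rho g\,|\boldsymbol x-\boldsymbol y|^2\,{\rm d}\boldsymbol y$), so it cannot be continued from $\partial\Omega$ to $\partial B_R$ by potential theory alone. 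Some care is also needed to verify that $q$ genuinely inherits the Sommerfeld behavior and that $w$ decays as a Yukawa solution, so that the Helmholtz and modified-Helmholtz uniqueness theorems apply.
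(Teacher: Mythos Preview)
Your plan is correct and complete; the structure---Fourier transform, exterior uniqueness to propagate the data from $\partial\Omega$ to $\partial B_R$, then coefficient matching in the low-frequency expansion \eqref{solution2}---is exactly what the paper does. Where you genuinely diverge is in the execution of two substeps.

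For the exterior uniqueness, the paper invokes its Appendix Theorem~\ref{direct}, whose proof couples $\tilde{\hat u}$ and $w=-\kappa^{-2}\Delta\tilde{\hat u}$ symmetrically and applies a Rellich-type argument to the pair. Your factorization $q=(\Delta-\kappa^2)w$ followed by Helmholtz uniqueness for $q$ and then modified-Helmholtz uniqueness for $w$ is a cleaner decoupling; the verification that $q$ inherits the Sommerfeld condition is immediate (it is a linear combination of $w$ and $\Delta w$, both radiating), and for the Yukawa step the Sommerfeld condition indeed excludes the exponentially growing mode, so the maximum principle applied to the real and imaginary parts finishes it. Both arguments are standard; yours is slightly more modular.

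For extracting \eqref{uniqueness}--\eqref{uniqueness1}, the paper expands $g_0(|\boldsymbol x-\boldsymbol y|)$ in spherical harmonics on $\partial B_R$ and uses orthonormality of $\{Y_m^n\}$ to isolate $\int\phi\,|\boldsymbol y|^m\overline{Y}_m^n=0$ for every $m,n$, then invokes that homogeneous harmonic polynomials span all harmonics. Your route---vanishing of the Newtonian potential $W_\phi$ on the open set $B_R\setminus\Omega$, analytic continuation to $\mathbb R^3\setminus\Omega$ (connected by hypothesis), then one application of Green's identity against an arbitrary harmonic $h$---bypasses the spherical-harmonic machinery entirely and is arguably more direct. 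The paper's version has the minor advantage of making the role of $\partial B_R$ explicit and tying in with the formulation of \eqref{eq3}; your version uses the full exterior equality you already established, which is the more natural resource. Either way the $\kappa^3$ identity \eqref{eq3} is just the raw coefficient match, as you say.
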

\begin{proof}
Using the temporal Fourier transform,  let  $\hat u_1(\kappa, \boldsymbol x)$ and  $\hat u_2(\kappa, \boldsymbol x)$ denote  the solution of \eqref{theq}, corresponding to $(\rho_1, f_1, g_1)$ and $(\rho_2, f_2, g_2)$ respectively.
It follows from \eqref{boundary} and \eqref{equlity} that
\begin{align*}
(\hat u_1(\boldsymbol x), ~\Delta\hat u_1(\boldsymbol x))=(\hat u_2(\boldsymbol x)	,~\Delta\hat u_2(\boldsymbol x)),~\quad \boldsymbol x\in \partial\Omega.
\end{align*}
Since the priori information for density and internal sources, it is easy to verify that both $\hat u_1$ and $\hat u_2$ satisfy the same  equation
\begin{align*}
\Delta^2 u-\kappa^4u=0~\quad \text{in}~\mathbb R^3\backslash\overline\Omega.
\end{align*}
Additionally, from the uniqueness of exterior boundary value problem in Theorem \ref{direct}, we have
\begin{align*}
(\hat u_1(\boldsymbol x), ~\Delta\hat u_1(\boldsymbol x))=(\hat u_2(\boldsymbol x)	,~\Delta\hat u_2(\boldsymbol x)),~\quad \boldsymbol x\in \mathbb R^3\backslash\Omega.
\end{align*}
Due to $\partial B_R\subset \mathbb R^3\backslash\overline\Omega$, we can  obtain
\begin{align}\label{equali1}
(\hat u_1(\boldsymbol x), ~\Delta\hat u_1(\boldsymbol x))=(\hat u_2(\boldsymbol x)	,~\Delta\hat u_2(\boldsymbol x)),~\quad \boldsymbol x\in\partial B_R.
\end{align}
Combining  \eqref{solution2} and \eqref{equali1}, we imply the following integral identities
\begin{align}
\frac{1}{2\pi}\int_{B_R}\rho_1(\boldsymbol y)g_1(\boldsymbol y)g_0(|\boldsymbol x-\boldsymbol y|)~{\rm d}\boldsymbol y=&\frac{1}{2\pi}\int_{B_R}\rho_2(\boldsymbol y)g_2(\boldsymbol y)g_0(|\boldsymbol x-\boldsymbol y|)~{\rm d}\boldsymbol y, \label{order1}\\
\frac{{\rm i}\kappa^2}{2\pi}\int_{B_R}\rho_1(\boldsymbol y)f_1(\boldsymbol y)g_0(|\boldsymbol x-\boldsymbol y|)~{\rm d}\boldsymbol y=&\frac{{\rm i}\kappa^2}{2\pi}\int_{B_R}\rho_2(\boldsymbol y)f_2(\boldsymbol y)g_0(|\boldsymbol x-\boldsymbol y|)~{\rm d}\boldsymbol y,\label{order2}
\end{align}
and
\begin{align*}
&\frac{\kappa^3}{2\pi}\frac{({\rm i}+1)}{8\pi}\big(\int_{B_R}(\rho_1(\boldsymbol y)-1)\int_{B_R}\rho_1(\boldsymbol z)g_1(\boldsymbol z)~{\rm d} \boldsymbol z~ g_0(|\boldsymbol x-\boldsymbol y|)~{\rm d}\boldsymbol y\nonumber\\
&\hspace{3cm}+\int_{B_R}\rho_1(\boldsymbol y)f_1(\boldsymbol y)~{\rm d}\boldsymbol y-\int_{B_R}\rho_1(\boldsymbol y)g_1(\boldsymbol y)\frac{|\boldsymbol x-\boldsymbol y|^2}{3!}~{\rm d}\boldsymbol y\big)\nonumber\\
=&\frac{\kappa^3}{2\pi}\frac{({\rm i}+1)}{8\pi}\big(\int_{B_R}(\rho_2(\boldsymbol y)-1)\int_{B_R}\rho_2(\boldsymbol z)g_2(\boldsymbol z)~{\rm d} \boldsymbol z~ g_0(|\boldsymbol x-\boldsymbol y|)~{\rm d}\boldsymbol y\nonumber\\
&\hspace{3cm}+\int_{B_R}\rho_2(\boldsymbol y)f_2(\boldsymbol y)~{\rm d}\boldsymbol y-\int_{B_R}\rho_2(\boldsymbol y)g_2(\boldsymbol y)\frac{|\boldsymbol x-\boldsymbol y|^2}{3!}~{\rm d}\boldsymbol y\big)
\end{align*}
for $\boldsymbol x\in \partial B_R$.

Note that the fundamental solution of $-\Delta$ can be written as
\begin{align}\label{spher}
\frac{1}{4\pi|\boldsymbol x-\boldsymbol y|}=\underset{m=0}{\overset{\infty}{\sum}}\underset{n=-m}{\overset{m}{\sum}}\frac{1}{2m+1}\frac{|\boldsymbol y|^m}{|\boldsymbol x|^{m+1}}Y^n_m(\frac{\boldsymbol x}{|\boldsymbol x|})\overline{Y}^n_m(\frac{\boldsymbol y}{|\boldsymbol y|})~\quad \text{for}~|\boldsymbol x|>|\boldsymbol y|,
\end{align}
where $Y^n_m(\cdot)$ denotes the spherical harmonics of order $m\in \mathbb N\cup \{0\}$ for $n=-m, \cdots, m$.

Substituting \eqref{spher} into \eqref{order1} and \eqref{order2}, we calculate that
\begin{align*}
	&\int_{B_R}\underset{m=0}{\overset{\infty}{\sum}}\underset{n=-m}{\overset{m}{\sum}}\frac{1}{2m+1}\frac{1}{|\boldsymbol x|^{m+1}}Y^n_m(\frac{\boldsymbol x}{|\boldsymbol x|})\rho_1(\boldsymbol y)f_1(\boldsymbol y)|\boldsymbol y|^m\overline{Y}^n_m(\frac{\boldsymbol y}{|\boldsymbol y|})~{\rm d}\boldsymbol y\\
	=&\int_{B_R}\underset{m=0}{\overset{\infty}{\sum}}\underset{n=-m}{\overset{m}{\sum}}\frac{1}{2m+1}\frac{1}{|\boldsymbol x|^{m+1}}Y^n_m(\frac{\boldsymbol x}{|\boldsymbol x|})\rho_2(\boldsymbol y)f_2(\boldsymbol y)|\boldsymbol y|^m\overline{Y}^n_m(\frac{\boldsymbol y}{|\boldsymbol y|})~{\rm d}\boldsymbol y \quad \text{for}~\boldsymbol x\in \partial B_R,
\end{align*}
and
\begin{align*}
	&\int_{B_R}\underset{m=0}{\overset{\infty}{\sum}}\underset{n=-m}{\overset{m}{\sum}}\frac{1}{2m+1}\frac{1}{|\boldsymbol x|^{m+1}}Y^n_m(\frac{\boldsymbol x}{|\boldsymbol x|})\rho_1(\boldsymbol y)g_1(\boldsymbol y)|\boldsymbol y|^m\overline{Y}^n_m(\frac{\boldsymbol y}{|\boldsymbol y|})~{\rm d}\boldsymbol y\\
	=&\int_{B_R}\underset{m=0}{\overset{\infty}{\sum}}\underset{n=-m}{\overset{m}{\sum}}\frac{1}{2m+1}\frac{1}{|\boldsymbol x|^{m+1}}Y^n_m(\frac{\boldsymbol x}{|\boldsymbol x|})\rho_2(\boldsymbol y)g_2(\boldsymbol y)|\boldsymbol y|^m\overline{Y}^n_m(\frac{\boldsymbol y}{|\boldsymbol y|})~{\rm d}\boldsymbol y \quad \text{for}~\boldsymbol x\in \partial B_R.
\end{align*}
That is,
\begin{align*}
&\underset{m=0}{\overset{\infty}{\sum}}\underset{n=-m}{\overset{m}{\sum}}\frac{1}{2m+1}\frac{1}{|R|^{m+1}}Y^n_m(\frac{\boldsymbol x}{|\boldsymbol x|})\int_{B_R}\rho_1(\boldsymbol y)f_1(\boldsymbol y)|\boldsymbol y|^m\overline{Y}^n_m(\frac{\boldsymbol y}{|\boldsymbol y|})~{\rm d}\boldsymbol y\\
	=&\underset{m=0}{\overset{\infty}{\sum}}\underset{n=-m}{\overset{m}{\sum}}\frac{1}{2m+1}\frac{1}{|R|^{m+1}}Y^n_m(\frac{\boldsymbol x}{|\boldsymbol x|})\int_{B_R}\rho_2(\boldsymbol y)f_2(\boldsymbol y)|\boldsymbol y|^m\overline{Y}^n_m(\frac{\boldsymbol y}{|\boldsymbol y|})~{\rm d}\boldsymbol y \quad \text{for}~\boldsymbol x\in \partial B_R,
	\end{align*}
	and
\begin{align*}
&\underset{m=0}{\overset{\infty}{\sum}}\underset{n=-m}{\overset{m}{\sum}}\frac{1}{2m+1}\frac{1}{|R|^{m+1}}Y^n_m(\frac{\boldsymbol x}{|\boldsymbol x|})\int_{B_R}\rho_1(\boldsymbol y)g_1(\boldsymbol y)|\boldsymbol y|^m\overline{Y}^n_m(\frac{\boldsymbol y}{|\boldsymbol y|})~{\rm d}\boldsymbol y\\
	=&\underset{m=0}{\overset{\infty}{\sum}}\underset{n=-m}{\overset{m}{\sum}}\frac{1}{2m+1}\frac{1}{|R|^{m+1}}Y^n_m(\frac{\boldsymbol x}{|\boldsymbol x|})\int_{B_R}\rho_2(\boldsymbol y)g_2(\boldsymbol y)|\boldsymbol y|^m\overline{Y}^n_m(\frac{\boldsymbol y}{|\boldsymbol y|})~{\rm d}\boldsymbol y \quad \text{for}~\boldsymbol x\in \partial B_R.
	\end{align*}	
Indeed,  $\{Y^n_m(\cdot)\}_{m=0, 1, 2, ....; n=-m, ..., m}$ is a complete orthonormal basis of $L^2(\mathbb S^2)$ (note that $\mathbb S^2$ means the unit sphere), we get
\begin{align*}
\int_{B_R}\big(\rho_1(\boldsymbol y)f_1(\boldsymbol y)-\rho_2(\boldsymbol y)f_2(\boldsymbol y)\big)|\boldsymbol y|^m\overline{Y}^n_m(\frac{\boldsymbol y}{|\boldsymbol y|})~{\rm d}\boldsymbol y
	=0,
\end{align*}
and
\begin{align*}
	\int_{B_R}\big(\rho_1(\boldsymbol y)g_1(\boldsymbol y)-\rho_2(\boldsymbol y)g_2(\boldsymbol y)\big)|\boldsymbol y|^m\overline{Y}^n_m(\frac{\boldsymbol y}{|\boldsymbol y|})~{\rm d}\boldsymbol y
	=0,
\end{align*}
where $m\in \mathbb N\cup \{0\}$ for $n=-m, ..., m$.

For any homogeneous harmonic function $h(\cdot)$ can be represent by $|\boldsymbol y|^mY^n_m(\frac{\boldsymbol y}{|\boldsymbol y|})$ for $m=0, 1, 2, ...$, and $n=-m, ..., m$, which yields
\begin{equation}\label{eq:i1}
\begin{split}
	\int_{B_R}(\rho_1f_1-\rho_2f_2)(\boldsymbol y)h(\boldsymbol y)~{\rm d}\boldsymbol y=&0, \quad \boldsymbol x\in \partial B_R,\\
\int_{B_R}(\rho_1g_1-\rho_2g_2)(\boldsymbol y)h(\boldsymbol y)~{\rm d}\boldsymbol y=&0, \quad \boldsymbol x\in \partial B_R.
\end{split}
\end{equation}
\end{proof}

\begin{rema}
Since $f_i, g_i$ are supported in $\Omega$ and $\text{supp}(\rho_i-1)\subset \Omega, i=1, 2$, we note that the integral domains in \eqref{eq:i1} can be replaced by $\Omega$.
\end{rema}

\subsection{The uniqueness results}
\begin{theo}\label{t2}
Assume that  $(\rho_1, f_1, g_1)$ and $(\rho_2, f_2, g_2)$ are two sets of admissible configurations and supported in $\Omega$, respectively.
Furthermore, suppose that
	\begin{align*}
		F(\boldsymbol x):=&(\rho_1f_1-\rho_2f_2)(\boldsymbol x),~\quad G(\boldsymbol x):=(\rho_1g_1-\rho_2g_2)(\boldsymbol x), ~\quad \boldsymbol x\in \Omega,
	\end{align*}
 satisfy the either of the following conditions:
\begin{enumerate}[(\rm i)]
	\item $F(\boldsymbol x)=h_1(\boldsymbol x)$ and $ G(\boldsymbol x)=h_2(\boldsymbol x)$ for  $\boldsymbol x\in\Omega$, where $h_1(\boldsymbol x)$ and $ h_2(\boldsymbol x)$ are harmonic functions in $\mathbb R^3$;
	\item $\nabla F(\boldsymbol x)\cdot \boldsymbol\iota=0$ and $\nabla G(\boldsymbol x)\cdot\boldsymbol\iota=0$, where $\boldsymbol \iota$ is an arbitrary direction vector in $\mathbb R^3$.
\end{enumerate}
Then
\begin{align*}
F(\boldsymbol x)= G(\boldsymbol x)=0~\quad \text{for}~a. e. ~~\boldsymbol x\in\Omega.	
\end{align*}
\end{theo}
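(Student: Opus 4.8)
The plan is to convert the weak orthogonality relations \eqref{uniqueness}, \eqref{uniqueness1} furnished by Theorem \ref{t1} into genuine pointwise vanishing, by feeding in the structural hypotheses (i), (ii). Throughout I will use that $F$ and $G$ are supported in the compact set $\Omega$, so that every pairing $\int F\,h$ against a continuous harmonic $h$ is an integral over a bounded region and is finite for $F\in L^\infty$.

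For hypothesis (i) I would test \eqref{uniqueness} against a carefully chosen harmonic function. Since $F=h_1$ with $h_1$ harmonic in $\mathbb R^3$ and the Laplacian has real coefficients, $\overline{h_1}$ is again harmonic; taking $h=\overline{h_1}$ gives
\begin{align*}
0=\int_{\mathbb R^3}F(\boldsymbol x)\,\overline{h_1(\boldsymbol x)}~{\rm d}\boldsymbol x=\int_\Omega |h_1(\boldsymbol x)|^2~{\rm d}\boldsymbol x ,
\end{align*}
which forces $h_1=0$, i.e. $F=0$, a.e. on $\Omega$; the analogous choice $h=\overline{h_2}$ in \eqref{uniqueness1} yields $G=0$ a.e.

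For hypothesis (ii) I would first reduce to a coordinate direction. Choosing an orthogonal $O$ with $O\boldsymbol e_3=\boldsymbol\iota$ and replacing $F(\boldsymbol x),G(\boldsymbol x)$ by $F(O\boldsymbol x),G(O\boldsymbol x)$—a substitution that preserves harmonicity of test functions and boundedness of the support—lets me assume $\boldsymbol\iota=\boldsymbol e_3$, so that $\partial_{x_3}F=\partial_{x_3}G=0$ in the distributional sense. An $L^\infty$ function annihilated by $\partial_{x_3}$ is independent of $x_3$, say $F(\boldsymbol x)=\varphi(x_1,x_2)$; since $\text{supp}(F)\subset\Omega\subset B_R$ we have $F=0$ for $|x_3|>R$, and the $x_3$-independence then forces $\varphi\equiv0$, hence $F=0$ a.e. (and $G=0$ a.e. by the same reasoning).

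The displayed manipulations are routine; the step I expect to demand the most care is the one in case (ii) that turns the distributional identity $\partial_{x_3}F=0$ into the pointwise representation $F=\varphi(x_1,x_2)$ and then into vanishing. This is precisely where the a priori localization $\text{supp}(F),\text{supp}(G)\subset\Omega$ is indispensable: constancy along a single direction by itself does not force a function to vanish, and it is the compactness of the support that closes the argument. In case (i) the only point to verify is that $\overline{h_1}$ (equivalently $\overline{h_2}$) is a legitimate entire harmonic function, so that \eqref{uniqueness} and \eqref{uniqueness1} apply; this is immediate from the assumption that $h_1,h_2$ are harmonic on all of $\mathbb R^3$.
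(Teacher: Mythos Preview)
Your treatment of case (i) is essentially the paper's argument: test \eqref{uniqueness} (resp.\ \eqref{uniqueness1}) with the harmonic function $h_1$ (resp.\ $h_2$) itself and conclude from $\int_\Omega h_j^2=0$. Your extra care in using $h=\overline{h_j}$ is harmless and, if anything, slightly cleaner.

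Your case (ii) is correct but follows a genuinely different route from the paper. After the same rotation to $\boldsymbol\iota=\boldsymbol e_3$, the paper does \emph{not} invoke compact support directly; instead it feeds the complex harmonic exponential $h(\boldsymbol x)=e^{{\rm i}\tilde{\boldsymbol\xi}\cdot\boldsymbol x}$ with $\tilde{\boldsymbol\xi}=(\xi_1,\xi_2,0)+{\rm i}(0,0,\xi_3)$, $\xi_1^2+\xi_2^2=\xi_3^2$, into \eqref{uniqueness}, separates off the $x_3$--integral, and reads the remaining identity as the vanishing of the two--dimensional Fourier transform of $F$ for all $(\xi_1,\xi_2)\in\mathbb R^2$. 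Your argument is more elementary---``translation--invariant in one direction and compactly supported implies zero''---and, notably, does not use the orthogonality relations \eqref{uniqueness}, \eqref{uniqueness1} (hence not the measurement hypothesis) at all for this case. That is a legitimate shortcut under the stated reading of $\nabla F\cdot\boldsymbol\iota=0$ on $\mathbb R^3$; what the paper's Fourier route buys is that it is the template reused verbatim in the later uniqueness proofs (Theorem~\ref{t3}, Corollary~\ref{coro1}), where the orthogonality relations are genuinely needed to decouple $\rho$ from $f,g$.
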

\begin{proof}
For the first case, taking $h(\boldsymbol x)= h_1(\boldsymbol x)$ and $h(\boldsymbol x)= h_2(\boldsymbol x)$ into \eqref{uniqueness} and \eqref{uniqueness1}, respectively,
we get
\begin{align*}
\int_\Omega h_1^2(\boldsymbol x)~{\rm d}\boldsymbol x=\int_\Omega h_2^2(\boldsymbol x)~{\rm d}\boldsymbol x=0.
\end{align*}
It shows that $F(\boldsymbol x)=G(\boldsymbol x)= 0$.

For the second case, because of the rotation invariance of the biharmonic operator $\Delta^2$, the vector $\boldsymbol\iota$ can be rotated appropriately to any coordinate axis.
Without loss of generality, we assume that $\boldsymbol \iota$ rotates to the $x_3$-axis, then we have
\begin{align*}
\partial_{x_3}F(\boldsymbol x)=	\partial_{x_3}G(\boldsymbol x)=0,
\end{align*}
which  means $F(\boldsymbol x), G(\boldsymbol x)$ only depend on the variables $x_1, x_2$ for $(x_1, x_2, x_3)\in \mathbb R^3$.

Denote by
\begin{align}\label{harmoeq}
	h(\boldsymbol x)=e^{{\rm i}\tilde{\boldsymbol\xi}\cdot \boldsymbol x},\quad \boldsymbol x\in \mathbb R^3
\end{align}
be a harmonic function, where
\begin{align*}
\tilde{\boldsymbol {\xi}}=\boldsymbol \xi_1+{\rm i}\boldsymbol \xi_2,~\quad \boldsymbol \xi_1=(\xi_1, \xi_2, 0)^\top\in \mathbb R^3, ~\quad \boldsymbol\xi_2=(0, 0, \xi_3)^\top\in \mathbb R^3,
\end{align*}
and satisfies $\xi_1^2+\xi_2^2=\xi_3^2$.

Plugging \eqref{harmoeq} into \eqref{uniqueness}, we compute
\begin{align*}
\int_{\mathbb R^3}F(x_1, x_2)e^{{\rm i}\tilde{\boldsymbol \xi}\cdot \boldsymbol x}~{\rm d}\boldsymbol x
=&\int_{B_R}F(x_1, x_2)e^{{\rm i}\tilde{\boldsymbol \xi}\cdot \boldsymbol x}~{\rm d}\boldsymbol x\\
=&\int_{\mathbb R^2}F(x_1, x_2)e^{{\rm i}\xi_1\cdot x_1+{\rm i}\xi_2\cdot x_2}~{\rm d}x_1{\rm d}x_2\int_{\{x_3; (x_1, x_2, x_3)\in B_R\}} e^{-\xi_3x_3}{\rm d}x_3
=0.
\end{align*}
It follows from the priori information of $\rho_i, f_i$ and $g_i, i=1, 2$ that
\begin{align*}
	0=\int_{\mathbb R^2}F(x_1, x_2)e^{{\rm i}\xi_1\cdot x_1+{\rm i}\xi_2\cdot x_2}~{\rm d}x_1{\rm d}x_2=(\mathcal F F)(\boldsymbol \xi_1),
\end{align*}
which holds for any $(\xi_1, \xi_2)\in \mathbb R^2$.
Since $(\mathcal F F)(\boldsymbol \xi_1)$ means the Fourier transform of $F(x_1, x_2)$, it clearly showed that $F(\boldsymbol x)=0$.
 We can state $G(\boldsymbol x)=0$ by using the same methods.
The proof is completed.
\end{proof}

Now, we discuss the uniqueness for density and internal sources by using above orthogonality results.
\begin{theo}\label{t3}
Assume that  $(\rho_1, f_1, g_1)$ and $(\rho_2, f_2, g_2)$ are two sets of configurations and supported in $\Omega$, which satisfy both conditions:
\begin{enumerate}[(i)]
	\item $\rho_1$ and $\rho_2$ are positive constants;
	\item $\nabla f_i(\boldsymbol x)\cdot \boldsymbol\iota=0$ and $\nabla g_i(\boldsymbol x)\cdot\boldsymbol\iota=0, i=1, 2$, where $\boldsymbol \iota$ is an arbitrary direction vector in $\mathbb R^3$.
\end{enumerate}
If
	\begin{align}\label{boun22}
	\Lambda_{\rho_1, f_1, g_1}(t, \boldsymbol x)=\Lambda_{\rho_2, f_2, g_2}(t, \boldsymbol x),~\quad (t, \boldsymbol x)\in \mathbb R_+\times \partial\Omega,
	\end{align}
and suppose that
\begin{align}\label{additioncon}
\int_{\Omega}g_i(\boldsymbol x)~{\rm d}\boldsymbol x\neq 0, ~\quad i=1, 2,~\quad\boldsymbol x\in \Omega.
\end{align}
Then
\[\rho_1=\rho_2,~\quad f_1(\boldsymbol x)=f_2(\boldsymbol x),\quad g_1(\boldsymbol x)=g_2(\boldsymbol x).\]
\end{theo}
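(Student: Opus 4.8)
The plan is to combine the first-order harmonic orthogonality relations of Theorem \ref{t1} with the third-order integral identity \eqref{eq3}, exploiting the constancy of $\rho_1,\rho_2$ to separate the density from the sources. First I would transfer condition (ii) to the differences $F:=\rho_1 f_1-\rho_2 f_2$ and $G:=\rho_1 g_1-\rho_2 g_2$. Since each $\rho_i$ is constant, $\nabla F\cdot\boldsymbol\iota=\rho_1(\nabla f_1\cdot\boldsymbol\iota)-\rho_2(\nabla f_2\cdot\boldsymbol\iota)=0$ and similarly $\nabla G\cdot\boldsymbol\iota=0$. Thus the configurations fall under case (ii) of Theorem \ref{t2}, and the measurement equality \eqref{boun22} (which supplies the hypotheses of Theorem \ref{t1}) forces
$$\rho_1 f_1=\rho_2 f_2,\qquad \rho_1 g_1=\rho_2 g_2\quad\text{a.e. in }\Omega.$$
These recover the products $\rho f$ and $\rho g$, but do not by themselves separate the density from the sources.

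The key step is to insert these equalities into \eqref{eq3}. Because $\rho_1 f_1=\rho_2 f_2$, the terms $\int\rho_i f_i\,{\rm d}\boldsymbol y$ coincide on the two sides and cancel; because $\rho_1 g_1=\rho_2 g_2$, the weighted terms $\int\rho_i g_i\,|\boldsymbol x-\boldsymbol y|^2/3!\,{\rm d}\boldsymbol y$ cancel as well. Denoting by $S:=\int_\Omega\rho_1 g_1\,{\rm d}\boldsymbol y=\int_\Omega\rho_2 g_2\,{\rm d}\boldsymbol y$ the common value of the constant inner integral in the surviving double-integral term, \eqref{eq3} reduces to
$$S\int_{\mathbb R^3}(\rho_1(\boldsymbol y)-1)g_0(|\boldsymbol x-\boldsymbol y|)\,{\rm d}\boldsymbol y=S\int_{\mathbb R^3}(\rho_2(\boldsymbol y)-1)g_0(|\boldsymbol x-\boldsymbol y|)\,{\rm d}\boldsymbol y,\qquad \boldsymbol x\in\partial B_R.$$

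To conclude, I would use that each $\rho_i$ is a positive constant equal to $1$ off $\Omega$, so $\rho_i(\boldsymbol y)-1=(\rho_i-1)\chi_\Omega(\boldsymbol y)$ and the last display becomes $S(\rho_1-\rho_2)\int_\Omega g_0(|\boldsymbol x-\boldsymbol y|)\,{\rm d}\boldsymbol y=0$ for all $\boldsymbol x\in\partial B_R$. The Newtonian potential $\int_\Omega g_0(|\boldsymbol x-\boldsymbol y|)\,{\rm d}\boldsymbol y$ is strictly positive for $\boldsymbol x\in\partial B_R$, since its integrand is positive and $\Omega$ has positive measure, and the nonvanishing hypothesis \eqref{additioncon} together with $\rho_i>0$ gives $S=\rho_i\int_\Omega g_i\neq0$; hence $\rho_1=\rho_2$. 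Substituting back into $\rho_1 f_1=\rho_2 f_2$ and $\rho_1 g_1=\rho_2 g_2$ and dividing by the common nonzero constant yields $f_1=f_2$ and $g_1=g_2$.

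The main obstacle is the bookkeeping at \eqref{eq3}: one must verify that precisely the two source contributions cancel and that the residual density contribution factors through the single scalar $S$, so that the positivity of the Newtonian potential and the condition \eqref{additioncon} can be invoked. This is exactly where the assumption that $\rho$ is constant and the nonvanishing of $\int_\Omega g_i$ become indispensable, since the first-order relations \eqref{uniqueness}--\eqref{uniqueness1} alone determine only the products $\rho f$ and $\rho g$ and cannot disentangle the individual factors.
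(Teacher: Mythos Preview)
Your proposal is correct and follows essentially the same route as the paper: use Theorem~\ref{t2}(ii) to obtain $\rho_1 f_1=\rho_2 f_2$ and $\rho_1 g_1=\rho_2 g_2$, feed these into \eqref{eq3} so that only the density term survives, and then invoke \eqref{additioncon} together with the strict positivity of the Newtonian potential $\int_\Omega g_0(|\boldsymbol x-\boldsymbol y|)\,{\rm d}\boldsymbol y$ to force $\rho_1=\rho_2$. Your explicit use of $\rho_i(\boldsymbol y)-1=(\rho_i-1)\chi_\Omega(\boldsymbol y)$ and the positivity argument for the potential are in fact slightly more detailed than the paper's own presentation.
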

\begin{proof}
By rotation invariance, without loss of generality, we still assume that $\boldsymbol \iota$ rotates to the $x_3$-axis.
 Then  $\rho_1f_1(\boldsymbol x)-\rho_2f_2(\boldsymbol x)$  and $\rho_1g_1(\boldsymbol x)-\rho_2g_2(\boldsymbol x)$ only depend on the variables $x_1, x_2$.
By Theorem \ref{t2} and \eqref{boun22},
we deduce
\begin{align*}
	\rho_1f_1(\boldsymbol x)=\rho_2f_2(\boldsymbol x),~\quad \rho_1g_1(\boldsymbol x)=\rho_2g_2(\boldsymbol x),~\quad \boldsymbol x\in \Omega.
\end{align*}
It can be seen that
\begin{align}
\int_{B_R}\rho_1f_1(\boldsymbol y)~{\rm d}\boldsymbol y=&\int_{B_R}\rho_2f_2(\boldsymbol y)~{\rm d}\boldsymbol y,\label{eq4}\\
\int_{B_R}\rho_1g_1(\boldsymbol y)~{\rm d}\boldsymbol y=&\int_{B_R}\rho_2g_2(\boldsymbol y)~{\rm d}\boldsymbol y,\label{eq4'}
\end{align}
and
\begin{align}\label{eq5}
\int_{B_R}\rho_1g_1(\boldsymbol y)|\boldsymbol x-\boldsymbol y|^2~{\rm d}\boldsymbol y=&\int_{B_R}\rho_2g_2(\boldsymbol y)|\boldsymbol x-\boldsymbol y|^2~{\rm d}\boldsymbol y.
\end{align}
Substituting \eqref{eq4}--\eqref{eq5} into \eqref{eq3}, imply
\begin{align*}
(\rho_1-\rho_2)\int_{B_R}\big(\int_{B_R}\rho_1g_1(\boldsymbol z)~{\rm d}\boldsymbol z \big)g_0(|\boldsymbol x-\boldsymbol y|)~{\rm d}\boldsymbol y=0, ~\boldsymbol x\in \partial B_R.
\end{align*}
From the assumption \eqref{additioncon}, we have
\begin{align*}
\rho_1\int_{B_R}g_1(\boldsymbol z)~{\rm d}\boldsymbol z\neq 0.
\end{align*}
It is easy to verify that
\begin{align*}
\int_{B_R}\big(\int_{B_R}\rho_1g_1(\boldsymbol z)~{\rm d}\boldsymbol z \big)g_0(|\boldsymbol x-\boldsymbol y|)~{\rm d}\boldsymbol y\neq 0 ~\quad\text{for} ~\boldsymbol x\in \partial B_R.
\end{align*}
Therefore, we have $\rho_1=\rho_2$ and imply $f_1(\boldsymbol x)=f_2(\boldsymbol x)$ and $g_1(\boldsymbol x)=g_2(\boldsymbol x)$, respectively.
\end{proof}

Next, we prove the uniqueness for density and internal sources in a domain with an anomalous inclusion.
Let $\rho_0(\boldsymbol x)$ be a positive background density which is known in advance and $\varrho_i, i=1, 2$ be a positive constant denoting different anomalous inclusion supported in $\Omega_{0}\subset \Omega$.
\begin{coro}\label{coro1}
Assume that  $(\rho_1, f_1, g_1)$ and $(\rho_2, f_2, g_2)$ are two sets of configurations and supported in $\Omega$, which satisfy	the conditions
\begin{enumerate}[(i)]
	\item $\rho_i(\boldsymbol x)=\rho_0(\boldsymbol x)+\varrho_i\chi_{\Omega_0}$ with $\varrho_i, i=1, 2$ is a constant;
	\item $\nabla\rho_0(\boldsymbol x)\cdot\boldsymbol \iota=0$, $\nabla f_i(\boldsymbol x)\cdot \boldsymbol\iota=0$ and $\nabla g_i(\boldsymbol x)\cdot\boldsymbol\iota=0, i=1, 2$, where $\boldsymbol \iota$ is an arbitrary direction vector in $\mathbb R^3$.
\end{enumerate}
If
	\begin{align*}
	\Lambda_{\rho_1, f_1, g_1}(t, \boldsymbol x)=\Lambda_{\rho_2, f_2, g_2}(t, \boldsymbol x),~\quad (t, \boldsymbol x)\in \mathbb R_+\times \partial\Omega,
	\end{align*}
and  suppose that
\begin{align}\label{addition2}
\int_{\Omega_0}\big(\int_{B_R}\rho_1(\boldsymbol z)g_1(\boldsymbol z)~{\rm d}\boldsymbol z \big)h(\boldsymbol x)~{\rm d}\boldsymbol x\neq0, ~\quad \boldsymbol x\in \Omega_0
\end{align}
for any harmonic function $h(\boldsymbol x)$.
Then
\[\varrho_1=\varrho_2, \quad f_1(\boldsymbol x)=f_2(\boldsymbol x),~\quad g_1(\boldsymbol x)=g_2(\boldsymbol x).\]	
\end{coro}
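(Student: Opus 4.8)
The plan is to follow the scheme of Theorem \ref{t3}, the new feature being the non-constant background $\rho_0$ together with the piecewise structure $\rho_i=\rho_0+\varrho_i\chi_{\Omega_0}$. First I would invoke Theorem \ref{t1}: since the two passive measurements coincide, the orthogonality identities \eqref{uniqueness} and \eqref{uniqueness1} hold for every harmonic function $h$, and the third-order identity \eqref{eq3} holds for all $\boldsymbol x\in\partial B_R$. Next, writing $F=\rho_1f_1-\rho_2f_2$ and $G=\rho_1g_1-\rho_2g_2$, I would verify that hypothesis (ii) forces $\nabla F\cdot\boldsymbol\iota=\nabla G\cdot\boldsymbol\iota=0$: indeed $\rho_if_i=\rho_0f_i+\varrho_i\chi_{\Omega_0}f_i$, and $\nabla(\rho_0f_i)\cdot\boldsymbol\iota=(\nabla\rho_0\cdot\boldsymbol\iota)f_i+\rho_0(\nabla f_i\cdot\boldsymbol\iota)=0$, while on the inclusion the characteristic factor is constant so the remaining contribution also has vanishing $\boldsymbol\iota$-derivative. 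Theorem \ref{t2}(ii) then yields $F=G=0$ a.e., that is, $\rho_1f_1=\rho_2f_2$ and $\rho_1g_1=\rho_2g_2$ a.e. in $\Omega$.

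With these pointwise identities in hand, I would substitute them into \eqref{eq3}. Because $\rho_1f_1=\rho_2f_2$ and $\rho_1g_1=\rho_2g_2$, the terms $\int\rho_if_i~{\rm d}\boldsymbol y$ and $\int\rho_ig_i\frac{|\boldsymbol x-\boldsymbol y|^2}{3!}~{\rm d}\boldsymbol y$ coincide on the two sides and cancel, and moreover $\int_{B_R}\rho_1g_1=\int_{B_R}\rho_2g_2=:C$. Hence \eqref{eq3} collapses to $C\int_{B_R}(\rho_1-\rho_2)(\boldsymbol y)g_0(|\boldsymbol x-\boldsymbol y|)~{\rm d}\boldsymbol y=0$ for $\boldsymbol x\in\partial B_R$. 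Using $\rho_1-\rho_2=(\varrho_1-\varrho_2)\chi_{\Omega_0}$, this becomes $C(\varrho_1-\varrho_2)\int_{\Omega_0}g_0(|\boldsymbol x-\boldsymbol y|)~{\rm d}\boldsymbol y=0$ for every $\boldsymbol x\in\partial B_R$.

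To conclude, I would argue by contradiction. Suppose $\varrho_1\neq\varrho_2$. Since $\Omega_0\subset B_R$ and $|\boldsymbol x|=R>|\boldsymbol y|$ for $\boldsymbol y\in\Omega_0$, I expand $g_0$ by \eqref{spher} and use the orthonormality of $\{Y^n_m(\boldsymbol x/|\boldsymbol x|)\}$ on $\partial B_R$ to obtain $C\int_{\Omega_0}|\boldsymbol y|^m\overline Y^n_m(\boldsymbol y/|\boldsymbol y|)~{\rm d}\boldsymbol y=0$ for all $m\in\mathbb N\cup\{0\}$ and $n=-m,\dots,m$. As the homogeneous harmonics $|\boldsymbol y|^mY^n_m(\boldsymbol y/|\boldsymbol y|)$ span the harmonic functions, this would give $C\int_{\Omega_0}h(\boldsymbol y)~{\rm d}\boldsymbol y=0$ for every harmonic $h$, contradicting \eqref{addition2}. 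Therefore $\varrho_1=\varrho_2$, whence $\rho_1=\rho_0+\varrho_1\chi_{\Omega_0}=\rho_2$; since $\rho_1=\rho_2\ge\rho_0>0$, dividing the identities $\rho_1f_1=\rho_2f_2$ and $\rho_1g_1=\rho_2g_2$ gives $f_1=f_2$ and $g_1=g_2$.

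I expect the main obstacle to be this last step: passing from the single scalar relation $C(\varrho_1-\varrho_2)\int_{\Omega_0}g_0(|\boldsymbol x-\boldsymbol y|)~{\rm d}\boldsymbol y=0$ to the separation of the unknown constant $\varrho_1-\varrho_2$, which requires the spherical-harmonic decomposition \eqref{spher} together with the correct reading of \eqref{addition2} as a non-degeneracy (moment) condition on the inclusion $\Omega_0$. A secondary delicate point is the transfer of the directional hypothesis (ii) to $F$ and $G$ across the interface $\partial\Omega_0$, where the characteristic function $\chi_{\Omega_0}$ must be handled so that Theorem \ref{t2}(ii) genuinely applies.
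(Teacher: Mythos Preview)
Your proposal is correct and follows essentially the same route as the paper: invoke Theorem~\ref{t1}, use Theorem~\ref{t2}(ii) to get $\rho_1f_1=\rho_2f_2$ and $\rho_1g_1=\rho_2g_2$, substitute into \eqref{eq3}, and then use the spherical-harmonic expansion \eqref{spher} (the paper phrases this as ``it follows from the proof of Theorem~\ref{t1}'') to reduce to the orthogonality relation that contradicts \eqref{addition2} unless $\varrho_1=\varrho_2$. The only cosmetic difference is order of operations---you substitute $\rho_1-\rho_2=(\varrho_1-\varrho_2)\chi_{\Omega_0}$ before expanding $g_0$, whereas the paper expands first---and your flagged concern about $\chi_{\Omega_0}$ across $\partial\Omega_0$ is a genuine subtlety that the paper also leaves implicit.
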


\begin{proof}
Using analogue analysis as Theorem \ref{t3}.
We assume that $\rho_0$, $f_i$ and $g_i$ only depend on the variables $x_1, x_2$, and write them as $\rho_0(\boldsymbol x)=\rho_0(x_1, x_2), f_i(\boldsymbol x)=f_i(x_1, x_2), g_i(\boldsymbol x)=g_i(x_1, x_2)$ for $(x_1, x_2)\in \mathbb R^2, i=1, 2$,
then we deduce
\begin{align*}
	\rho_1(\boldsymbol x)f_1(\boldsymbol x)=&\rho_2(\boldsymbol x)f_2(\boldsymbol x),~\quad\rho_1(\boldsymbol x)g_1(\boldsymbol x)=\rho_2(\boldsymbol x)g_2(\boldsymbol x),~\quad \boldsymbol x\in \Omega.
\end{align*}
It is easy to see that
\begin{align*}
\int_{B_R}\rho_1(\boldsymbol y)f_1(\boldsymbol y)~{\rm d}\boldsymbol y=&\int_{B_R}\rho_2(\boldsymbol y)f_2(\boldsymbol y)~{\rm d}\boldsymbol y,\\
\int_{B_R}\rho_1(\boldsymbol y)g_1(\boldsymbol y)~{\rm d}\boldsymbol y=&\int_{B_R}\rho_2(\boldsymbol y)g_2(\boldsymbol y)~{\rm d}\boldsymbol y,
\end{align*}
and
\begin{align*}
\int_{B_R}\rho_1(\boldsymbol y)g_1(\boldsymbol y)|\boldsymbol x-\boldsymbol y|^2~{\rm d}\boldsymbol y=&\int_{B_R}\rho_2(\boldsymbol y)g_2(\boldsymbol y)|\boldsymbol x-\boldsymbol y|^2~{\rm d}\boldsymbol y~\quad \text{for}~ \boldsymbol x\in \partial B_R.
\end{align*}
Taking above identities into \eqref{eq3}, imply
\begin{align*}
	&\int_{B_R}(\rho_1(\boldsymbol y)-1)\big(\int_{B_R}\rho_1(\boldsymbol z)g_1(\boldsymbol z)~{\rm d}\boldsymbol z \big)g_0(|\boldsymbol x-\boldsymbol y|)~{\rm d}\boldsymbol y\\
	=&\int_{B_R}(\rho_2(\boldsymbol y)-1)\big(\int_{B_R}\rho_2(\boldsymbol z)g_2(\boldsymbol z)~{\rm d}\boldsymbol z \big)g_0(|\boldsymbol x-\boldsymbol y|)~{\rm d}\boldsymbol y~\quad \text{for}~ \boldsymbol x\in \partial B_R.
\end{align*}
It follows from the proof of Theorem \ref{t1} that
\begin{align}\label{eq6}
	&\int_{B_R}(\rho_1-\rho_2)(\boldsymbol y)\big(\int_{B_R}\rho_1(\boldsymbol z)g_1(\boldsymbol z)~{\rm d}\boldsymbol z \big)h(\boldsymbol y)~{\rm d}\boldsymbol y=0~\quad \text{for}~ \boldsymbol x\in \partial B_R,
\end{align}
where $h(\cdot)$ is any harmonic function.

Substituting $\rho_i=\rho_0(\boldsymbol x)+\varrho_i\chi_{\Omega_0}$ into \eqref{eq6}, we have
\begin{align*}
	(\varrho_1-\varrho_2)\int_{\Omega_0}\big(\int_{B_R}\rho_1(\boldsymbol z)g_1(\boldsymbol z)~{\rm d}\boldsymbol z \big)h(\boldsymbol y)~{\rm d}\boldsymbol y=0.
\end{align*}
Because of the condition \eqref{addition2}, we get
\begin{align*}
	\varrho_1=\varrho_2,
\end{align*}
which yields $g_1(\boldsymbol x)=g_2(\boldsymbol x)$ and $f_1(\boldsymbol x)=f_2(\boldsymbol x)$.
\end{proof}

\begin{rema}
In fact, the condition \eqref{additioncon} is a special form for \eqref{addition2}.
There are other ways to achieve the non-zero condition for \eqref{addition2}.
\end{rema}

Besides the assumptions of density and internal sources in  Theorem \ref{t3} and Corollary \ref{coro1},
we also consider whether there are other circumstances in which a more general uniqueness result can be derived.
The following example illustrates a more general result.
\begin{exam}
	Let $(\rho_1, f_1, g_1)$ and $(\rho_2, f_2, g_2)$ be two sets of configurations and supported in $\Omega$, which satisfy
	\[(\rho_2, f_2, g_2)=(\rho_1+a, f_1+b, g_1+c)\]
	with $a(\boldsymbol x), b(\boldsymbol x), c(\boldsymbol x)\in L^\infty(\mathbb R^3)$ are nonnegative and supported in $\Omega$.
	Furthermore, suppose that $f_1(\boldsymbol x), g_1(\boldsymbol x)>0$.
	If
	\begin{align*}
		\Lambda_{\rho_1, f_1, g_1}(t, \boldsymbol x)=\Lambda_{\rho_2, f_2, g_2}(t, \boldsymbol x),~\quad (t, \boldsymbol x)\in \mathbb R_+\times \partial\Omega,
	\end{align*}
then
\[a(\boldsymbol x)=b(\boldsymbol x)=c(\boldsymbol x)=0.\]
\end{exam}
\begin{proof}
Assume that at least one of $a(\boldsymbol x), b(\boldsymbol x)$ and  $c(\boldsymbol x)$ is not zero.
Without losing of generality, we set $a\neq 0$, then we have
 \begin{align*}
	(\rho_1f_1-\rho_2f_2)(\boldsymbol x)=&(\rho_1f_1-(\rho_1f_1+af_1+b\rho_1+ab))(\boldsymbol x)=-(af_1+b\rho_1+ab)(\boldsymbol x)<0, \\
	(\rho_1g_1-\rho_2g_2)(\boldsymbol x)=&(\rho_1g_1-(\rho_1g_1+ag_1+c\rho_1+ac))(\boldsymbol x)=-(ag_1+c\rho_1+ac)(\boldsymbol x)<0,
\end{align*}
which yields
\begin{align}
\int_{\Omega}	(\rho_1f_1-\rho_2f_2)(\boldsymbol x)~{\rm d}\boldsymbol x<0, \label{e1}\\
\int_{\Omega}	(\rho_1g_1-\rho_2g_2)(\boldsymbol x)~{\rm d}\boldsymbol x<0. \label{e2}
\end{align}
It follows from \eqref{uniqueness} and \eqref{uniqueness1} that
\begin{align*}
	\int_{\Omega}(\rho_1f_1-\rho_2f_2)(\boldsymbol x)~{\rm d}\boldsymbol x=&0,\\
	\int_{\Omega}(\rho_1g_1-\rho_2g_2)(\boldsymbol x)~{\rm d}\boldsymbol x=&0,
\end{align*}
when taking $h(\boldsymbol x)=1$.
Therefore, the inequalities \eqref{e1} and \eqref{e2} are contradiction.
The proof is completed.
\end{proof}

It can be seen from the above example that a more general uniqueness result can be obtained if additional assumptions of density and sources are considered. The detail process will be shown in the following section.

\section{Extension to  general results}\label{s4}
In this section, the previous assumptions of density and internal sources will be replaced by assuming some size relationships of density, internal sources and their coupling term, which implies  more general unique results.
\begin{lemm}\label{t5}
	Assume that  $(\rho_1, f_1, g_1)$ and $(\rho_2, f_2, g_2)$ are two sets of configurations and supported in $\Omega$.
	If
	\begin{align*}
	\Lambda_{\rho_1, f_1, g_1}(t, \boldsymbol x)=\Lambda_{\rho_2, f_2, g_2}(t, \boldsymbol x),~\quad (t, \boldsymbol x)\in \mathbb R_+\times \partial\Omega,
	\end{align*}
and  satisfies
	\begin{align}
			(\rho_1g_1)(\boldsymbol x)\leq (\rho_2g_2)(\boldsymbol x)~\quad\text{or}~\quad (\rho_1g_1)(\boldsymbol x)\geq (\rho_2g_2)(\boldsymbol x),~\quad \boldsymbol x\in \Omega.\label{auxi7}
	\end{align}
Then
\begin{align}\label{N6}
	\rho_1(\boldsymbol x)g_1(\boldsymbol x)= \rho_2(\boldsymbol x)g_2(\boldsymbol x).
\end{align}

In addition, if
\begin{align*}
		\int_{\mathbb R^3}(\rho_ig_i)(\boldsymbol x)~{\rm d}\boldsymbol x\neq 0, ~\quad i=1, 2,
	\end{align*}
we have
\begin{align}\label{N1}
	\int_{\mathbb R^3}(\rho_1-\rho_2)(\boldsymbol x)h(\boldsymbol x)~{\rm d}\boldsymbol x=0,
\end{align}
where $h(\boldsymbol x)$ is any harmonic function in $\mathbb R^3$.
\end{lemm}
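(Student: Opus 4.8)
The plan is to first upgrade the orthogonality relation \eqref{uniqueness1} into the pointwise identity \eqref{N6} by a sign-definiteness argument, and then substitute that identity into the coupling relation \eqref{eq3} to isolate the density difference and test it against harmonic functions.

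First I would set $G(\boldsymbol x):=(\rho_1g_1-\rho_2g_2)(\boldsymbol x)$, which is supported in $\Omega$ since each $g_i$ is. Choosing the harmonic function $h\equiv 1$ in \eqref{uniqueness1} gives $\int_\Omega G\,{\rm d}\boldsymbol x=0$. The hypothesis \eqref{auxi7} asserts that $G$ keeps one sign throughout $\Omega$ (either $G\le 0$ or $G\ge 0$); a one-signed integrable function whose integral vanishes must vanish almost everywhere, so $G=0$ a.e., which is precisely \eqref{N6}.

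For the second assertion I would feed \eqref{N6} back into the coupling identity \eqref{eq3}. Put $c:=\int_{\mathbb R^3}\rho_1g_1\,{\rm d}\boldsymbol z=\int_{\mathbb R^3}\rho_2g_2\,{\rm d}\boldsymbol z$, the two integrals agreeing by \eqref{N6}. On each side of \eqref{eq3} the inner integral in the first term is exactly this constant $c$, so those terms become $c\int_{\mathbb R^3}(\rho_i-1)\,g_0(|\boldsymbol x-\boldsymbol y|)\,{\rm d}\boldsymbol y$. The second terms $\int_{\mathbb R^3}\rho_if_i\,{\rm d}\boldsymbol y$ coincide by taking $h\equiv 1$ in \eqref{uniqueness}, and the third terms $\int_{\mathbb R^3}\rho_ig_i\frac{|\boldsymbol x-\boldsymbol y|^2}{3!}\,{\rm d}\boldsymbol y$ coincide directly by \eqref{N6}. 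After these cancellations \eqref{eq3} collapses to
\[
c\int_{\mathbb R^3}(\rho_1-\rho_2)(\boldsymbol y)\,g_0(|\boldsymbol x-\boldsymbol y|)\,{\rm d}\boldsymbol y=0,\qquad \boldsymbol x\in\partial B_R.
\]
The assumption $\int_{\mathbb R^3}(\rho_ig_i)\,{\rm d}\boldsymbol x\neq 0$ forces $c\neq 0$, so I may divide it out and obtain $\int_{\mathbb R^3}(\rho_1-\rho_2)(\boldsymbol y)\,g_0(|\boldsymbol x-\boldsymbol y|)\,{\rm d}\boldsymbol y=0$ for every $\boldsymbol x\in\partial B_R$.

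Finally, since $\rho_1-\rho_2=(\rho_1-1)-(\rho_2-1)$ is supported in $\Omega\subset B_R$, each $\boldsymbol y$ in the support obeys $|\boldsymbol y|<R=|\boldsymbol x|$ for $\boldsymbol x\in\partial B_R$, so I may insert the spherical-harmonic expansion \eqref{spher} of $g_0=\tfrac{1}{4\pi|\boldsymbol x-\boldsymbol y|}$ and interchange summation with integration. Repeating verbatim the orthonormal-basis argument from the proof of Theorem \ref{t1} then yields $\int_{B_R}(\rho_1-\rho_2)(\boldsymbol y)|\boldsymbol y|^m\overline{Y}^n_m(\boldsymbol y/|\boldsymbol y|)\,{\rm d}\boldsymbol y=0$ for all admissible $m,n$. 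Because every homogeneous harmonic polynomial is of the form $|\boldsymbol y|^mY^n_m(\boldsymbol y/|\boldsymbol y|)$ and such polynomials generate the harmonic functions, the identity \eqref{N1} follows. The main obstacle I anticipate is the bookkeeping in \eqref{eq3}: one must invoke \eqref{N6} \emph{twice}—once to identify the inner constants $c$ on the two sides and once to cancel the quadratic moment term—together with \eqref{uniqueness} at $h\equiv 1$ to remove the source term, so that after dividing by $c\neq 0$ the density difference is cleanly isolated against the Newtonian kernel.
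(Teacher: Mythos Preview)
Your proposal is correct and follows essentially the same route as the paper's proof: take $h\equiv 1$ in \eqref{uniqueness1} and use the sign hypothesis to force $\rho_1 g_1=\rho_2 g_2$, then feed this (together with $h\equiv 1$ in \eqref{uniqueness}) into \eqref{eq3}, divide by the nonzero constant $c=\int\rho_i g_i$, and rerun the spherical-harmonic argument from Theorem~\ref{t1}. If anything, your write-up is a bit more explicit than the paper's about where $c\neq 0$ is used and about the two distinct invocations of \eqref{N6}.
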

\begin{proof}
Let $h(\boldsymbol x)=1$, then it follows from \eqref{uniqueness} and \eqref{uniqueness1} that
\begin{align}
\int_{B_R}(\rho_1f_1)(\boldsymbol y)~{\rm d}\boldsymbol y=&\int_{B_R}(\rho_2f_2)(\boldsymbol y)~{\rm d}\boldsymbol y,\label{auxilaryine}\\
\int_{B_R}(\rho_1g_1)(\boldsymbol y)~{\rm d}\boldsymbol y=&\int_{B_R}(\rho_2g_2)(\boldsymbol y)~{\rm d}\boldsymbol y. \label{auxilaryine2}
\end{align}
Given by the conditions \eqref{auxi7}, if
\[(\rho_1g_1-\rho_2g_2)(\boldsymbol x)>0 ~\quad  \text{or}~\quad (\rho_1g_1-\rho_2g_2)(\boldsymbol x)<0,\]  we have
\begin{align*}
\int_{B_R}(\rho_1g_1-\rho_2g_2)(\boldsymbol y)	~{\rm d}\boldsymbol y>0~\quad\text{or}\quad\int_{B_R}(\rho_1g_1-\rho_2g_2)(\boldsymbol y)	~{\rm d}\boldsymbol y<0.
\end{align*}
This contradiction with \eqref{auxilaryine2}.
Therefore, we get
\begin{align*}
	(\rho_1g_1)(\boldsymbol x)=(\rho_2g_2)(\boldsymbol x),
\end{align*}
 and imply
\begin{align}\label{N5}
	\int_{B_R}(\rho_1g_1)(\boldsymbol y)|\boldsymbol x-\boldsymbol y|^2~{\rm d}\boldsymbol y=\int_{B_R}(\rho_2g_2)(\boldsymbol y)|\boldsymbol x-\boldsymbol y|^2~{\rm d}\boldsymbol y.
\end{align}
Substituting \eqref{auxilaryine}-\eqref{N5} into \eqref{eq3}, we obtain
\begin{align*}
	\int_{B_R}(\rho_1-\rho_2)(\boldsymbol y)~g_0(|\boldsymbol x-\boldsymbol y|)~{\rm d}\boldsymbol y=0.
\end{align*}
Repeating the process of \eqref{uniqueness} and \eqref{uniqueness1} in Theorem \ref{t1}, we derive an orthogonal relation
\begin{align*}
	\int_{\mathbb R^3}(\rho_1-\rho_2)(\boldsymbol x)~h(\boldsymbol x)~{\rm d}\boldsymbol x=0
\end{align*}
for any harmonic function $h(\boldsymbol x)$ in $\mathbb R^3$.
\end{proof}

\begin{coro}
With the same assumptions of Lemma \ref{t5}.
If $\rho_i(\boldsymbol x), i=1, 2$ satisfies the either of the following conditions:
\begin{enumerate}[(i)]
	\item $(\rho_1-\rho_2)(\boldsymbol x)$ is a harmonic function in $\mathbb R^3$;
	\item $\rho_1(\boldsymbol x)\leq \rho_2(\boldsymbol x)$ or $\rho_1(\boldsymbol x)\geq \rho_2(\boldsymbol x),~\quad  \boldsymbol x\in \Omega$.
\end{enumerate}
Then  \[\rho_1(\boldsymbol x)=\rho_2(\boldsymbol x)~\quad \text{and}~\quad  g_1(\boldsymbol x)=g_2(\boldsymbol x).\]

Furthermore, suppose that
\begin{align}\label{N8}
	f_1(\boldsymbol x)\leq f_2(\boldsymbol x)~\quad \text{or} ~\quad f_1(\boldsymbol x)\geq f_2(\boldsymbol x),~\quad  \boldsymbol x\in \Omega.
\end{align}
Then \[f_1(\boldsymbol x)=f_2(\boldsymbol x).\]		
\end{coro}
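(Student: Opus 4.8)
The plan is to combine the two conclusions already extracted in Lemma \ref{t5}: the pointwise identity \eqref{N6}, $\rho_1 g_1=\rho_2 g_2$, and the orthogonality relation \eqref{N1}, $\int_{\mathbb R^3}(\rho_1-\rho_2)h\,{\rm d}\boldsymbol x=0$ valid for every harmonic $h$. Since both follow from the ``same assumptions of Lemma \ref{t5}''—in particular the one-signed condition \eqref{auxi7} on $\rho g$ and the non-degeneracy $\int\rho_i g_i\neq 0$—they are available from the outset, and the corollary reduces to choosing the right test function $h$ under each hypothesis on $\rho_1-\rho_2$.

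Under condition (i), I would substitute the harmonic function $h=\rho_1-\rho_2$ into \eqref{N1}, which yields $\int_{\mathbb R^3}(\rho_1-\rho_2)^2\,{\rm d}\boldsymbol x=0$ and hence $\rho_1=\rho_2$ almost everywhere. Under condition (ii), I would instead take the (harmonic) constant $h\equiv 1$ in \eqref{N1} to obtain $\int_{\mathbb R^3}(\rho_1-\rho_2)\,{\rm d}\boldsymbol x=0$; together with the single-sign hypothesis $\rho_1\le\rho_2$ (or $\rho_1\ge\rho_2$) this forces the one-signed integrand to vanish, so again $\rho_1=\rho_2$. This last step is exactly the argument already carried out for the product $\rho g$ in the proof of Lemma \ref{t5}.

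Having identified $\rho_1=\rho_2=:\rho$, I would feed this back into \eqref{N6}, which then reads $\rho(g_1-g_2)=0$; invoking the positivity of the density gives $g_1=g_2$ almost everywhere on $\Omega$. For the final claim on $f$, I would use the source orthogonality \eqref{uniqueness} of Theorem \ref{t1} with $h\equiv 1$, obtaining $\int_{\mathbb R^3}\rho(f_1-f_2)\,{\rm d}\boldsymbol x=0$; since $\rho\ge 0$ and the difference $f_1-f_2$ is one-signed by \eqref{N8}, the integrand $\rho(f_1-f_2)$ keeps a constant sign, so it must vanish and $f_1=f_2$.

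The steps are routine once Lemma \ref{t5} is in place; the only genuinely delicate point is the passage from the directly observable products $\rho g$ and $\rho f$ back to $g$ and $f$ themselves. This inversion is legitimate only where the density is strictly positive—on a null set of $\rho$ the sources are invisible to the measurement $\Lambda_{\rho,f,g}$—so I would make the positivity of $\rho$ explicit as the hypothesis underpinning both $g_1=g_2$ and $f_1=f_2$. Apart from this, one should check that the sign conventions in \eqref{auxi7} and \eqref{N8} remain compatible, so that the integrands $\rho(g_1-g_2)$ and $\rho(f_1-f_2)$ are indeed one-signed after $\rho_1=\rho_2$ has been established.
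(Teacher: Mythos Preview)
Your proposal is correct and follows essentially the same route as the paper: in case (i) you test \eqref{N1} with $h=\rho_1-\rho_2$, in case (ii) with $h\equiv 1$, then use \eqref{N6} and positivity of $\rho$ to get $g_1=g_2$, and finally test \eqref{uniqueness} with $h\equiv 1$ together with \eqref{N8} to get $f_1=f_2$. Your extra remark that the passage from $\rho g$ to $g$ (and $\rho f$ to $f$) requires strict positivity of $\rho$ is a point the paper leaves implicit.
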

\begin{proof}
For the first case, taking $(\rho_1-\rho_2)(\boldsymbol x)=h(\boldsymbol x)$ into \eqref{N1}, which implies
\begin{align*}
	\int_{\Omega}h^2(\boldsymbol x)~{\rm d}\boldsymbol x=0.
\end{align*}
Thus we can  obtain  $\rho_1(\boldsymbol x)=\rho_2(\boldsymbol x)$.

For the second case, substituting $h(\boldsymbol x)=1$ into \eqref{N1}, we get
\begin{align*}
	\int_{\Omega}(\rho_1-\rho_2)(\boldsymbol x)~{\rm d}\boldsymbol x=0.
\end{align*}
By using the conditions of $\rho_1(\boldsymbol x)$ and $\rho_2(\boldsymbol x)$, we deduce $\rho_1(\boldsymbol x)=\rho_2(\boldsymbol x)$.
It follows from \eqref{N6} that $g_1(\boldsymbol x)=g_2(\boldsymbol x)$.

Let $\rho_1(\boldsymbol x)=\rho_2(\boldsymbol x)=\rho(\boldsymbol x)$ and $h(\boldsymbol x)=1$, plugging them into \eqref{uniqueness}, we have
\begin{align*}
\int_\Omega \rho(\boldsymbol y)(f_1-f_2)	(\boldsymbol y)~{\rm d}\boldsymbol y=0.
\end{align*}
Therefore, given by \eqref{N8}, $f_1(\boldsymbol x)= f_2(\boldsymbol x)$ is proved.
\end{proof}




\begin{thebibliography}{10}

\bibitem{Yang2017}
Y.~M. Assylbekov and Y.~Yang.
\newblock Determining the first order perturbation of a polyharmonic operator
  on admissible manifolds.
\newblock {\em J. Differential Equations}, 262(1):590--614, 2017.



\bibitem{cao2019}
X.~Cao and H.~Liu.
\newblock Determining a fractional {H}elmholtz equation with unknown source and
  scattering potential.
\newblock {\em Commun. Math. Sci.}, 17(7):1861--1876, 2019.

\bibitem{colton2013}
D.~Colton and R.~Kress.
\newblock {\em Integral equation methods in scattering theory}, volume~72 of
  {\em Classics in Applied Mathematics}.
\newblock Society for Industrial and Applied Mathematics (SIAM),
Philadelphia,
  PA, 2013.


\bibitem{Co2019}
D.~Colton and R.~Kress.
\newblock {\em Inverse acoustic and electromagnetic scattering theory},
  volume~93 of {\em Applied Mathematical Sciences}.
\newblock Springer, 2019.

\bibitem{DLL1} Y. Deng, J. Li and H. Liu, {\it On identifying magnetized anomalies using geomagnetic monitoring}, Arch. Ration. Mech. Anal., \textbf{231} (2019), no. 1, 153--187.

\bibitem{DLL2} Y. Deng, J. Li and H. Liu, {\it On identifying magnetized anomalies using geomagnetic monitoring within a magnetohydrodynamic model}, Arch. Ration. Mech. Anal., \textbf{235} (2020), no. 1, 691--721.

\bibitem{Liu2019}
Y.~Deng, H.~Liu, and G.~Uhlmann.
\newblock On an inverse boundary problem arising in brain imaging.
\newblock {\em J. Differential Equations}, 267(4):2471--2502, 2019.


\bibitem{evans}
L.~C. Evans.
\newblock {\em Partial differential equations}, volume~19 of {\em Graduate
  Studies in Mathematics}.
\newblock American Mathematical Society, Providence, RI, second edition, 2010.

\bibitem{G2010}
F.~Gazzola, H.-C. Grunau, and G.~Sweers.
\newblock {\em Polyharmonic boundary value problems}, volume 1991 of {\em
  Lecture Notes in Mathematics}.
\newblock Springer-Verlag, Berlin, 2010.
\newblock Positivity preserving and nonlinear higher order elliptic equations
  in bounded domains.



\bibitem{Knox2020}
C.~Knox and A.~Moradifam.
\newblock Determining both the source of a wave and its speed in a medium from boundary measurements.
\newblock {\em Inverse Problems}, 36(2):025002, 15, 2020.

\bibitem{Gunther2012}
K.~Krupchyk, M.~Lassas, and G.~Uhlmann.
\newblock Determining a first order perturbation of the biharmonic operator by
  partial boundary measurements.
\newblock {\em J. Funct. Anal.}, 262(4):1781--1801, 2012.

\bibitem{Initial}
R.~Leis.
\newblock Initial-boundary value problems in mathematical physics.
\newblock In {\em Modern mathematical methods in diffraction theory and its
  applications in engineering ({F}reudenstadt, 1996)}, volume~42 of {\em
  Methoden Verfahren Math. Phys.}, pages 125--144. Peter Lang, Frankfurt am
  Main, 1997.

\bibitem{LLM}
J.~Li, H.~Liu and S.~Ma.
\newblock Determining a random Schr\"odinger equation with unknown source and potential.
\newblock {\em SIAM J. Math. Anal.}, 51 (4): 3465--3491, 2019.

\bibitem{Liu2021}
J.~Li, H.~Liu, and S.~Ma.
\newblock Determining a random {S}chr\"{o}dinger operator: both potential and
  source are random.
\newblock {\em Comm. Math. Phys.}, 381(2):527--556, 2021.


\bibitem{Liuboya2020}
B.~Liu.
\newblock Stability estimates in a partial data inverse boundary value problem
  for biharmonic operators at high frequencies.
\newblock {\em Inverse Probl. Imaging}, 14(5):783--796, 2020.

\bibitem{Liu2015}
H.~Liu and G.~Uhlmann.
\newblock Determining both sound speed and internal source in thermo- and
  photo-acoustic tomography.
\newblock {\em Inverse Problems}, 31(10):105005, 10, 2015.

\bibitem{ma2009}
S.~Mayboroda and V.~Maz'ya.
\newblock Boundedness of the gradient of a solution and {W}iener test of order
  one for the biharmonic equation.
\newblock {\em Invent. Math.}, 175(2):287--334, 2009.

\bibitem{M2009}
N.~V. Movchan, R.~C. McPhedran, A.~B. Movchan, and C.~G. Poulton.
\newblock Wave scattering by platonic grating stacks.
\newblock {\em Proc. R. Soc. Lond. Ser. A Math. Phys. Eng. Sci.},
  465(2111):3383--3400, 2009.

\bibitem{Tyni2018}
T.~Tyni and V.~Serov.
\newblock Scattering problems for perturbations of the multidimensional
  biharmonic operator.
\newblock {\em Inverse Probl. Imaging}, 12(1):205--227, 2018.

\bibitem{Yang2014}
Y.~Yang.
\newblock Determining the first order perturbation of a bi-harmonic operator on
  bounded and unbounded domains from partial data.
\newblock {\em J. Differential Equations}, 257(10):3607--3639, 2014.

\end{thebibliography}

\begin{appendix}
\section{The well-posedness for the exterior boundary problem.}
We prove the well-posedness for the exterior boundary value problem
\begin{align}\label{ex}
\begin{cases}
	\Delta^2\hat u-\kappa^4\hat u=0,~\quad &{\rm in}~\mathbb R^3\backslash\overline\Omega,\\
		\hat u=\phi_1,~\quad \Delta \hat u=\phi_2,~\quad &{\rm on}~\partial\Omega
	\end{cases}
\end{align}
with the Sommerfeld radiation conditions
\begin{align}\label{S}
\underset{r\rightarrow\infty}{\lim}r \left(\partial_r \hat u(\kappa,  \boldsymbol x)-{\rm i}\kappa\hat u(\kappa, \boldsymbol  x)\right)=0	,~\quad
\underset{r\rightarrow\infty}{\lim}r\left(\partial_{r} (\Delta\hat u(\kappa, \boldsymbol x))-{\rm i}\kappa(\Delta\hat u(\kappa, \boldsymbol x))\right)=0.	
\end{align}
\begin{theo}\label{direct}
There exists a unique solution for \eqref{ex} with the Sommerfeld conditions \eqref{S}.
\end{theo}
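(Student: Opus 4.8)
The plan is to exploit the factorization $\Delta^2-\kappa^4=(\Delta+\kappa^2)(\Delta-\kappa^2)$, which splits the fourth-order problem into a Helmholtz factor and a modified-Helmholtz factor, and then to combine the classical exterior Rellich uniqueness with a boundary-integral argument for existence.

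\emph{Uniqueness.} First I would treat the homogeneous problem $\phi_1=\phi_2=0$ and set $w:=\Delta\hat u-\kappa^2\hat u$. Then $(\Delta+\kappa^2)w=(\Delta^2-\kappa^4)\hat u=0$, so $w$ solves the homogeneous Helmholtz equation in $\mathbb R^3\setminus\overline\Omega$; since $w$ is a linear combination of $\hat u$ and $\Delta\hat u$, the two conditions in \eqref{S} show that $w$ satisfies the Sommerfeld radiation condition with wavenumber $\kappa$, while $w|_{\partial\Omega}=\phi_2-\kappa^2\phi_1=0$. Applying Green's identity on the shell between $\partial\Omega$ and a large sphere $S_R$ and taking imaginary parts gives $\operatorname{Im}\int_{S_R}\bar w\,\partial_r w\,\mathrm ds=0$; together with the radiation condition this forces $\int_{S_R}|w|^2\,\mathrm ds\to0$, and Rellich's lemma plus unique continuation yield $w\equiv0$. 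Consequently $\hat u$ solves the modified-Helmholtz equation $(\Delta-\kappa^2)\hat u=0$ in the exterior with $\hat u|_{\partial\Omega}=0$; the radiation condition excludes the growing mode $e^{\kappa r}/r$, so $\hat u$ decays like $e^{-\kappa r}/r$, and the energy identity $\int(|\nabla\hat u|^2+\kappa^2|\hat u|^2)=\int_{S_R}\bar{\hat u}\,\partial_r\hat u-\int_{\partial\Omega}\bar{\hat u}\,\partial_\nu\hat u$, whose two boundary terms both vanish in the limit, forces $\hat u\equiv0$.

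\emph{Existence.} I would build $\hat u$ by solving the two factors in succession. First solve the exterior Dirichlet Helmholtz problem $(\Delta+\kappa^2)v=0$ in $\mathbb R^3\setminus\overline\Omega$ with $v|_{\partial\Omega}=\phi_2-\kappa^2\phi_1$ and the Sommerfeld condition; this is classically solvable by a combined single- and double-layer potential based on $e^{{\rm i}\kappa|\boldsymbol x|}/(4\pi|\boldsymbol x|)$ together with Fredholm theory and the uniqueness above. Then solve $(\Delta-\kappa^2)\hat u=v$ in the exterior with $\hat u|_{\partial\Omega}=\phi_1$ and decay, writing $\hat u=-\,\Phi_{{\rm i}\kappa}*v+\hat u_{\rm h}$, where $\Phi_{{\rm i}\kappa}(\boldsymbol x)=e^{-\kappa|\boldsymbol x|}/(4\pi|\boldsymbol x|)$ and $\hat u_{\rm h}$ is the decaying solution of the homogeneous exterior Dirichlet problem for $\Delta-\kappa^2$ (uniquely solvable by the maximum principle). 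By construction $\Delta\hat u-\kappa^2\hat u=v$ and hence $\Delta^2\hat u-\kappa^4\hat u=(\Delta+\kappa^2)v=0$, while the data match, $\hat u|_{\partial\Omega}=\phi_1$ and $\Delta\hat u|_{\partial\Omega}=v|_{\partial\Omega}+\kappa^2\phi_1=\phi_2$. Equivalently, one may represent $\hat u$ directly through a layer potential built from the fundamental solution $G_\kappa$ in \eqref{fundmen} supplemented by a second density matched to the $\Delta\hat u$ datum, reducing \eqref{ex} to a $2\times 2$ system of boundary integral equations and invoking Fredholm theory with the uniqueness result.

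\emph{Main obstacle.} The delicate step is the far-field bookkeeping needed to verify that the constructed $\hat u$ satisfies both radiation conditions in \eqref{S} simultaneously. This is governed by the identity $(\Delta-\kappa^2)v=-2\kappa^2v$ valid where $v$ solves the Helmholtz equation, so that $(\Delta-\kappa^2)^{-1}$ reproduces $v$ up to an exponentially decaying remainder; hence $\hat u\sim-v/(2\kappa^2)$ and $\Delta\hat u\sim v/2$ to leading order, both radiating, and the exponentially small corrections are harmless. I expect this coupling of the two radiation conditions through the factorization, rather than the Fredholm solvability (which is routine once uniqueness is established), to be the principal technical difficulty.
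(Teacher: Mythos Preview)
Your proposal is correct and rests on the same factorization $\Delta^2-\kappa^4=(\Delta+\kappa^2)(\Delta-\kappa^2)$ as the paper, but you organize both halves differently. For uniqueness, the paper introduces $w=-\kappa^{-2}\Delta\tilde{\hat u}$ and runs a single energy identity on the coupled pair $(\tilde{\hat u},w)$, obtaining $\int_{\partial B_r}(|\tilde{\hat u}|^2+|w|^2)\,{\rm d}S\to0$ and then invoking Rellich; you instead treat the two factors sequentially, first killing $w=\Delta\hat u-\kappa^2\hat u$ by the classical exterior Helmholtz uniqueness and then killing $\hat u$ by the modified-Helmholtz energy estimate. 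Your route is a bit more transparent about which Rellich lemma is being used. For existence, the paper writes $\hat u=u_H+u_M$ as a \emph{sum} of a Helmholtz and a modified-Helmholtz solution and represents each by a combined layer potential, which decouples the $2\times2$ boundary system into two standard Fredholm equations; your primary construction is \emph{sequential} (solve Helmholtz for $v$, then an inhomogeneous modified-Helmholtz problem for $\hat u$). The paper's parallel decomposition avoids the volume potential $\Phi_{{\rm i}\kappa}*v$ over an unbounded set that you would otherwise have to justify; in fact your own observation that $-v/(2\kappa^2)$ is an exact particular solution of $(\Delta-\kappa^2)\hat u=v$ makes that convolution unnecessary and simultaneously disposes of what you flag as the ``main obstacle'': with $\hat u=-v/(2\kappa^2)+u_M$ and $u_M$ exponentially decaying, both radiation conditions in \eqref{S} are immediate. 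The alternative you sketch at the end (a $2\times2$ layer-potential system) is essentially the paper's argument.
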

\begin{proof}
Let $\tilde{\hat u}=\hat u_1-\hat u_2$, where $\hat u_1$ and $\hat u_2$ are solutions of \eqref{ex}--\eqref{S}, then it satisfies
\begin{align}\label{zm}
	\begin{cases}
	\Delta^2\tilde{\hat u}-\kappa^4\tilde{\hat u}=0,~\quad &{\rm in}~\mathbb R^3\backslash\overline\Omega,\\
		\tilde{\hat u}=0,~\quad \Delta \tilde{\hat u}=0,~\quad &{\rm on}~\partial\Omega
	\end{cases}
\end{align}
with the Sommerfeld conditions \eqref{S}.

 Denote that $\Omega_r=B_r\backslash\Omega$, where $B_r$ is a sphere of radius $r$ and center at the origin.
Let  $\Delta\tilde{\hat u}=-\kappa^2 w$,  then the first equation in \eqref{zm} turns to
\begin{align*}
\begin{cases}
\Delta \tilde{\hat u}+\kappa^2 w=0, ~\quad{\rm in}~\mathbb R^3\backslash\overline\Omega,\\
\Delta	w+\kappa^2\tilde{\hat u}=0, ~\quad{\rm in}~\mathbb R^3\backslash\overline\Omega.
\end{cases}
\end{align*}
Taking the inner product of the first equation with $\tilde{\hat u}$ and the second equation with $w$  over $\Omega_r$ and sum together, we compute
\begin{align}
	\int_{\Omega_r}	\Delta \tilde{\hat u}\bar{\tilde{\hat u}}+\kappa^2w\bar{\tilde{\hat u}}+\Delta w \bar w+\kappa^2\tilde{\hat u}\bar w~{\rm d}\boldsymbol x=0,\label{M1}\\
	\int_{\Omega_r}	\tilde{\hat u}\Delta\bar{\tilde{\hat u}}+\tilde{\hat u}\kappa^2\bar w+ w \Delta\bar w+w\kappa^2\bar{\tilde{\hat u}}~{\rm d}\boldsymbol x=0\label{M2}.
\end{align}
Subtracting \eqref{M2} with \eqref{M1} and  taking integration by parts, we have
\begin{align*}
0=&\int_{\Omega_r}	\Delta \tilde{\hat u}\bar{\tilde{\hat u}}+\kappa^2w\bar{\tilde{\hat u}}+\Delta w \bar w+\kappa^2\tilde{\hat u}\bar w~{\rm d}\boldsymbol x-\int_{\Omega_r}	\tilde{\hat u}\Delta\bar{\tilde{\hat u}}+\tilde{\hat u}\kappa^2\bar w+ w \Delta\bar w+w\kappa^2\bar{\tilde{\hat u}}~{\rm d}\boldsymbol x\\
=&\int_{\Omega_r}	\Delta \tilde{\hat u}\bar{\tilde{\hat u}}+\Delta w \bar w~{\rm d}\boldsymbol x-\int_{\Omega_r}	\tilde{\hat u}\Delta\bar{\tilde{\hat u}}+ w \Delta\bar w~{\rm d}\boldsymbol x\\
=&\int_{\partial B_r}	\partial_\nu \tilde{\hat u}\bar{\tilde{\hat u}}+\partial_\nu w \bar w~{\rm d}S-\int_{\partial\Omega}	\partial_\nu \tilde{\hat u}\bar{\tilde{\hat u}}+\partial_\nu w \bar w~{\rm d}S-\int_{\Omega_r} |\nabla\tilde{\hat u}|^2+|\nabla w|^2~{\rm d}\boldsymbol x\\
&-\int_{\partial B_r}	\tilde{\hat u}\partial_\nu\bar{\tilde{\hat u}}+ w \partial_\nu\bar w~{\rm d}S+\int_{\partial \Omega}	\tilde{\hat u}\partial_\nu\bar{\tilde{\hat u}}+ w \partial_\nu\bar w~{\rm d}S+\int_{\Omega_r} |\nabla\tilde{\hat u}|^2+|\nabla w|^2~{\rm d}\boldsymbol x\\
=&2{\rm i}{\rm Im}\int_{\partial B_r}	\partial_\nu \tilde{\hat u}\bar{\tilde{\hat u}}+\partial_\nu \bar w  w~{\rm d}S\\
=&2{\rm i}{\rm Im}\int_{\partial B_r}	{\rm i}\kappa |\tilde{\hat u}|^2+{\rm i}\kappa  |w|^2 ~{\rm d}S+\mathcal O(\frac{1}{r^2})(2{\rm i}{\rm Im}\int_{\partial B_r}	\bar{\tilde{\hat u}}+\bar w ~{\rm d}S)\\
=&2{\rm i}\kappa \int_{\partial B_r}	|\tilde{\hat u}|^2+  |w|^2 ~{\rm d}S+\mathcal O(\frac{1}{r^2})(2{\rm i}{\rm Im}\int_{\partial B_r}	\bar{\tilde{\hat u}}+\bar w ~{\rm d}S).
\end{align*}
Letting $r\rightarrow\infty$, we obtain the identity
\begin{align*}
	\underset{r\rightarrow\infty}{\lim}\int_{\partial B_r}	|\tilde{\hat u}|^2+  |w|^2 ~{\rm d}S=0.
\end{align*}
It follows from the Rellich's Lemma that $\tilde{\hat u}=0$ in $\mathbb R^3\backslash\overline\Omega$.

Next, we prove the existence of the solution.
The proof depends on the boundary integral equation.

Assume that $u_H$ and $u_M$ are the solution of $\Delta u+\kappa^2 u	=0$ and $\Delta u-\kappa^2 u=0$, respectively, then
$\hat u=u_H+u_M$ is a solution to $\Delta^2\hat u-\kappa^4\hat u=0$.

Let $G_H(|\boldsymbol x-\boldsymbol y|)$ be the Green function of the Helmholtz equation
\begin{align*}
\Delta u+\kappa^2 u	=0,
\end{align*}
and $G_M(|\boldsymbol x-\boldsymbol y|)$  be the Green function of the Modified Helmholtz equation
\begin{align*}
\Delta u-\kappa^2 u=0.	
\end{align*}
Given integrable functions $\varphi$ and $\psi$, we define the single-layer potential  and the double-layer potential
\begin{align*}
v_s(\boldsymbol x):=&\int_{\partial\Omega}	G_H(|\boldsymbol x-\boldsymbol y|)\varphi(\boldsymbol y)~{\rm d}\boldsymbol y+\int_{\partial\Omega}G_M(|\boldsymbol x-\boldsymbol y|)\psi(\boldsymbol y)~{\rm d}\boldsymbol y,~\quad \boldsymbol x\in \mathbb R^3\backslash\partial\Omega,\\
v_d(\boldsymbol x):=&\int_{\partial\Omega}	\frac{\partial G_H(|\boldsymbol x-\boldsymbol y|)}{\partial \nu(\boldsymbol y)}\varphi(\boldsymbol y)~{\rm d}\boldsymbol y+\int_{\partial\Omega}\frac{\partial G_M(|\boldsymbol x-\boldsymbol y|)}{\partial \nu(\boldsymbol y)}\psi(\boldsymbol y)~{\rm d}\boldsymbol y,~\quad \boldsymbol x\in \mathbb R^3\backslash\partial\Omega.
\end{align*}
Taking a solution of a combination of the double- and single-layer potential
\begin{align}\label{zl}
\hat u(\boldsymbol x)=v_d(\boldsymbol x)-{\rm i}\gamma v_s(\boldsymbol x),~\quad \boldsymbol x\in \mathbb R^3\backslash\partial\Omega,
\end{align}
where $\gamma$ is a nonzero constant.
Since the jump relations on $\partial\Omega$, we see that  $\hat u$ given by \eqref{zl} in $\mathbb R^3\backslash\partial\Omega$ solves the exterior problem \eqref{ex} provided $(\varphi, \psi)$ is a solution of the integral equation
\begin{align*}
\begin{cases}
	(-{\rm i}\gamma S^H+K^H+\frac{1}{2})\varphi+(-{\rm i}\gamma S^M+K^M+\frac{1}{2})\psi=\phi_1,\\
	(-{\rm i}\gamma S^H+K^H+\frac{1}{2})\varphi-(-{\rm i}\gamma S^M+K^M+\frac{1}{2})\psi=-\frac{\phi_2}{\kappa^2},
\end{cases}
\end{align*}
where
\begin{align*}
(S^H\varphi)(\boldsymbol x):=&	\int_{\partial\Omega}	G_H(|\boldsymbol x-\boldsymbol y|)\varphi(\boldsymbol y)~{\rm d}\boldsymbol y,~\quad \boldsymbol x\in \partial\Omega,\\
(S^M\psi)(\boldsymbol x):=&	\int_{\partial\Omega}	G_M(|\boldsymbol x-\boldsymbol y|)\psi(\boldsymbol y)~{\rm d}\boldsymbol y,~\quad \boldsymbol x\in \partial\Omega,\\
(K^H\varphi)(\boldsymbol x):=&\int_{\partial\Omega}	\frac{\partial G_H(|\boldsymbol x-\boldsymbol y|)}{\partial \nu(\boldsymbol y)}\varphi(\boldsymbol y)~{\rm d}\boldsymbol y,~\quad \boldsymbol x\in \partial\Omega,\\
(K^M\psi)(\boldsymbol x):=&\int_{\partial\Omega}	\frac{\partial G_M(|\boldsymbol x-\boldsymbol y|)}{\partial \nu(\boldsymbol y)}\psi(\boldsymbol y)~{\rm d}\boldsymbol y,~\quad \boldsymbol x\in \partial\Omega.
\end{align*}
By direct calculation, we have
\begin{align}\label{zx}
\begin{cases}
	(-{\rm i}\gamma S^H+K^H+\frac{1}{2})\varphi=\frac{1}{2}(\phi_1-\frac{\phi_2}{\kappa^2}),\\
	(-{\rm i}\gamma S^M+K^M+\frac{1}{2})\psi=\frac{1}{2}(\phi_1+\frac{\phi_2}{\kappa^2}).
\end{cases}	
\end{align}
 Therefore, the existence of $(\varphi, \psi)$ to \eqref{zx} can be established by the Riesz-Fredholm theory with the compactness of $S^H, S^M, K^H$ and $K^M$ (see \cite{Co2019,colton2013}).
Then the representation \eqref{zl} is a solution for the exterior boundary value problem \eqref{ex}.
The proof is completed.
  \end{proof}		
\end{appendix}

\end{document}